\newtheorem{theorem}{Theorem}[section]
\newtheorem{proposition}[theorem]{Proposition}
\newtheorem{definition}[theorem]{Definition}
\newtheorem{remark}[theorem]{Remark}
\title{Mild Solutions for Path-Dependent Parabolic PDEs with Neumann Boundary Conditions via Generalized BSDEs}
\author{
  Luca Di Persio\thanks{University of Verona, Italy. Email: \texttt{luca.dipersio@univr.it}} \and
  Matteo Garbelli\thanks{University of Verona, Italy. Email: \texttt{matteo.garbelli@univr.it}} \and
  Adrian Z\u{a}linescu\thanks{Alexandru Ioan Cuza University, Romania. Email: \texttt{adrian.zalinescu@info.uaic.ro}}
}
\begin{document}
\maketitle

\begin{abstract}
We study a system of Forward-Backward Stochastic Differential Equations (FBSDEs) with time-delayed generators. The forward process includes a reflection component expressed via a Stieltjes integral, while the backward process takes the form of a Generalized BSDE. We establish the connection between this FBSDE system and non-linear path-dependent PDEs with Neumann boundary conditions by deriving a representation formula.
\end{abstract}

\textbf{Keywords:} Feynman-Kac formula; path-dependent PDE; time delayed BSDE; semigroup; Neumann condition; reflected process

\textbf{MSC 2020:} 60H10; 60H30; 35R15; 35K20

\section{Introduction}

We aim to provide a probabilistic representation of a mild solution $u: [0,T] \times \Lambda \times \Gamma \rightarrow \mathbb{R}$
to the following nonlinear path-dependent Kolmogorov equation (PDKE) with
Neumann conditions
\begin{equation}
\left\{
\begin{array}{l}
\displaystyle -\partial _{t}u(t,\psi )-\mathcal{L}u(t,\psi )-f(t,\psi,u(t,\psi) , \partial
_{x}u\left( t,\psi \right) \sigma (t,\phi ),\left( u(\cdot ,\psi)\right)
_{t})=0 \, \smallskip,\\
\qquad\qquad\qquad\qquad\qquad\qquad\qquad\qquad\qquad\qquad\qquad\qquad\qquad\qquad\quad \psi =(\phi, \varphi) \in \Lambda \times \Gamma;\medskip  \\
\displaystyle \dfrac{\partial u}{\partial \nu}(t,\psi )+g(t,{\psi},u(t,\psi ),\left( u(\cdot
,\psi )\right) _{t})=0 \qquad  \textit{ if } \phi(t) \in \partial G  ;
\medskip  \\
\displaystyle u(T,\psi )=h(\psi ),\qquad \psi \in\Lambda \times \Gamma,
\end{array}%
\right.   \label{PDKE}
\end{equation}%
where $T<\infty $ is a fixed time horizon, $\bar{G}$ is the closure of a nonempty open convex bounded subset $G$ of $\mathbb{R}^d$, $\Lambda
:=\mathcal{C}([0,T]; \bar{G})$ is the space of continuous $\bar{G}
$-valued functions defined on $[0,T]$, while $\Gamma :=  \mathcal{C} ([0,T]; \mathbb{R}^d)  \cap \mathcal{BV}\left(  [0,T];\mathbb{R}^{d}\right) $ denotes the set of bounded variation and continuous $\mathbb{R}^d$-valued processes.

The space $\Gamma$ contains the \emph{reflection path} $K^{t,\psi}$, which represents the cumulative boundary correction needed to keep the forward process $X^{t,\psi}$ inside $\overline{G}$. The initial condition $\phi \in \Gamma$
specifies the history of this reflection up to the starting time $t$. Throughout the paper, we use the notation $\psi = (\phi, \varphi)$ to denote a couple in $\Lambda \times \Gamma$.

We denote by $\mathcal{L}$ the second-order differential operator given by
\begin{equation}
\label{operator}
\mathcal{L} u (t,\psi) := \frac{1}{2} \operatorname{Tr} \left[  \sigma (t,\phi)\sigma^{\ast} (t,\phi )\partial_{xx}^{2}
u(t,\psi ) \right] + \langle b(t,\phi) ,\partial_{x}u(t,\psi )\rangle ,
\end{equation}
with $b:\left[ 0,T\right] \times \Lambda \rightarrow \mathbb{R}^{d}$ and $\sigma :\left[ 0,T\right] \times \Lambda \rightarrow \mathbb{R}^{d\times d^{\prime }}$.

We define  the normal derivative of $u: [0,T] \times \Lambda \rightarrow \mathbb{R}$ appearing in the Neumann boundary condition by
\begin{equation}
\label{eq: directional}
\frac{\partial u}{\partial \nu}(t,\psi ) = \partial_x u (t,\psi ) \cdot \nu ( \phi (t)),
\end{equation}
where $\nu(x)$ is the interior unit normal to $\bar{G}$ in  $x\in\partial G$.

\noindent
Let $\delta>0$ be a fixed \textit{delay}. For a function $v:[0,T] \rightarrow \mathbb{R}$, we define its \textit{delayed path} as
\begin{equation}
\label{delayed_term}
v_t := v ((t + \theta)^+)_{\theta \in [-\delta, 0]} \, .
\end{equation}

We prove that under appropriate assumptions on the coefficients specified in Sec. \ref{sec:ass},  the
deterministic non-anticipative functional $u:\left[ 0,T\right] \times
\Lambda \rightarrow \mathbb{R}$ given by the representation formula%
\begin{equation}
    \label{fk}
u( t,\psi) :=Y^{t,\psi}\left( t\right) \, ,
\end{equation}
is a solution to equation (\ref{PDKE}), where $(X^{t,\psi} ,K^{t,\psi},Y^{t,\psi}
,Z^{t,\psi} )$
is the unique solution of a decoupled Forward Backward Stochastic
Differential Equation (FBSDE). The FSBDE system consists of a path-dependent forward reflected SDE for processes $(X^{t,\psi}, K^{t,\psi})$ and a backward component in the form of a generalized BSDE with generators depending on both the forward solution and time-delayed terms.

We assume initial data $\left( t,\psi \right) \in \left[ 0,T\right] \times \Lambda \times \Gamma$ and we introduce the following equation%
\begin{equation}
\left\{
\begin{array}{l}
\displaystyle X^{t,\psi}\left( s\right) =\phi \left( t\right) - \varphi \left( t\right)
+\int_{t}^{s}b(r,X^{t,\psi})dr+\int_{t}^{s}\sigma (r,X^{t,\psi})dW(
r) + K^{t,\psi}(s) ,\medskip  \\ \displaystyle
K^{t,\psi}(s)= \varphi(t) + \int_{t}^{s}\nu(X^{t,\psi}(r))\mathds{1}_{\left\{ X^{t,\psi
}\left( r\right) \in \partial G\right\} }dA^{t,\psi}(r)\, ,
\qquad s\in[t,T]\medskip  \\
\displaystyle \left(X^{t,\psi}\left( s\right), K^{t,\psi}\left( s\right) \right)
  = (\phi(s),\varphi(s))  \,
, \qquad s\in \lbrack 0,t)\bigskip  \, , \\
\displaystyle Y^{t,\psi}\left( s\right) =h({X}^{t,\psi}, {K}^{t,\psi}
)+\int_{s}^{T}f(r,{X}^{t,\psi}, {K}^{t,\psi},
Y^{t,\psi}\left( r\right) ,Z^{t,\psi}\left(
r\right) ,Y_{r}^{t,\psi})dr \smallskip  \\
\qquad  \qquad \displaystyle +\int_{s}^{T}g(r,{X}^{t,\psi}, {K}^{t,\psi},
Y^{t,\psi}\left( r\right) ,Y_{r}^{t,\psi})dA^{t,\psi}(r) \smallskip \\
\qquad  \qquad \displaystyle -\int_{s}^{T}Z^{t,\psi}\left( r\right) dW( r) , \qquad s\in[t,T]\medskip  \\
\displaystyle Y^{t,\psi}\left( s\right) =Y^{s,\psi}\left( s\right), \qquad s\in \lbrack 0,t) \, ,
\end{array}%
\right.   \label{FBSDE}
\end{equation}%
where $W$ is a standard Brownian motion and $dA^{t , \psi}$ denotes the total variation of $dK^{t,\psi}$. The notation $Y_r^{t,\psi}$ appearing in the generators $f$ and $g$ stands for the delayed path of the process $Y^{t,\psi}$, introduced in \eqref{delayed_term}.

The primary aim of this paper is to establish a novel probabilistic representation of mild solutions to nonlinear PDKE with Neumann boundary conditions \eqref{PDKE} in terms of the solution of the FBSDE \eqref{FBSDE}, which is characterized by comprehensive path-dependence. First, the PDE and BSDE coefficients depend on the entire path of the forward process. Second, the system features explicit time delays: the PDE's source term depends on the history of its own solution, $(u(\cdot,\psi))_t$, and the BSDE's generator relies on the history of the backward process, $Y_r^{t,\psi}$. This structure ensures the system's current dynamics are intrinsically linked to their past evolution.

At the heart of our problem is the reflection mechanism implemented via Stieltjes integrals concerning the boundary local time, enabling the modeling of constrained processes within the domain $\bar{G}$. This reflection appears in both the forward component, where the process $X^{t,\psi}$ is constrained to remain in $\bar{G}$ through the boundary regulator $K^{t,\psi}$, and in the backward component through the term $\int g\,dA^{t,\psi}$, which captures the boundary contributions tied to the Neumann conditions of the PDE.

A significant novelty of our work lies in the detailed analysis of the associated semigroup, which allows us to define a mild solution in this context rigorously. Our semigroup-based approach simultaneously accounts for path-dependence across the entire solution path, reflection at the boundary through Neumann conditions, and time-delay aspects in the PDE and its probabilistic representation. To the best of our knowledge, this comprehensive treatment has not been previously explored in this setting.

Pardoux and Zhang proved in \cite{pa-zh/98} the existence of a probabilistic representation for the viscosity solution of a Neumann boundary PDE by using a class of BSDEs involving a Stieltjes integral concerning the continuous increasing process. In this article, we generalize this classical result for a path-dependent Neumann PDE associated with a time-delayed BSDE.
A previous result of this type without considering path-dependent coefficients is also stated in
\cite{pa-ra/17}.  In \cite{zalin}, the authors state an approximation result for the viscosity solution of a system
of semi-linear Neumann PDE  with continuous coefficients. The approximation works with convergence in probability through a penalization method similar to the one we use for this article. In \cite{wong}, they prove a result of the existence and uniqueness of the Neumann boundary problem of a parabolic partial differential equation. Unlike our setting, the authors consider a singular nonlinear divergence term and study the PDE in a weak sense. In \cite{zalinescu}, the author works with
finite-dimensional multivalued SDEs in a weak setting in a more general framework still fitting the one analyzed in this paper. In \cite{brzez}, the authors consider a real-valued stochastic partial differential
equation (SPDEs) with reflection over an open unit ball in a separable Hilbert space. The initial value problem is described in terms of stochastic evolution equations where Riemann-Stieltjes integrals have Hilbert-valued coefficients.

The paper is structured as follows. In Section \ref{sec:ass}, we introduce the theoretical framework, specifying the assumptions on the coefficients and setting up the necessary background for the FBSDEs. Section \ref{sec:forward} focuses on the forward SDE with reflection and discusses the solution's existence, uniqueness, and Markovian properties. Section \ref{sec:fbsde} addresses the backward component and provides the key Feynman-Kac representation formula. In Section \ref{sec:gdg}, we introduce the concept of the generalized directional gradient and its relation to the solution of the BSDE, concluding with a penalization approach to solving the PDE. Finally, Section \ref{sec:mild} provides a formal definition of mild solutions in the context of path-dependent PDEs, incorporating both the semigroup structure and the contributions of the boundary conditions.

\section{Theoretical Framework}
\label{sec:ass}

Let $T>0$ be a finite horizon of time and
$\delta\in(0,T]$ a fixed time-delay. Let $G$ be an open, convex, bounded subset in $\mathbb{R}^d$. We assume the same assumptions as in \cite{pa-zh/98}, namely the existence of a
function $\ell \in \mathcal{C}^2_b (\mathbb{R}^d)$ parameterizing
$G = \{\ell> 0  \}$ and its boundary $\partial G = \{\ell= 0  \}$. We define the normal to $G$ for all $x \in \partial G$ as $\nu (x):=\nabla \ell(x)$ and assume that $|\nu(x)| = 1$, $\forall x\in\partial G$.

For $m \geq 1$ and a function $\phi$ in $\mathcal{C} ([0,T], \mathbb{R}^{m})$, we define $\Vert \phi \Vert$ as the uniform norm (supremum norm) defined by
$$
\Vert \phi \Vert = \sup_{s \in [0,T]}|\phi(s)| \, .
$$

We denote by \( \Lambda \) the subspace of continuous functions from the interval \([0, T]\) to \( G \):
\begin{equation}
\label{eq:lambda}
\Lambda := \mathcal{C}\left([0, T]; \bar{G}\right) = \left\{ \phi: [0, T] \rightarrow \bar{G} \ \big| \ \phi \text{ is continuous} \right\}.
\end{equation}

 By \( \Gamma \) we denote the space of continuous functions from the interval \([0, T]\) to \( \mathbb{R}^d \) that are of bounded variation
\begin{equation}
\label{eq:gamma}
\Gamma := \mathcal{C} ([0,T]; \mathbb{R}^d)  \cap \mathcal{BV}\left(  [0,T];\mathbb{R}^{d}\right) .
\end{equation}

For \( \varphi \in  \Gamma \), we define $ \Vert \varphi \Vert _{t,s}$ as the total variation of \( \varphi \) over the interval \( [t,s] \).

\noindent
On $\Lambda$,  $\Gamma$ and $\Lambda \times \Gamma$, we use the corresponding supremum norms unless otherwise specified.

\medskip

Let $\left(  \Omega,\mathcal{F},\mathbb{P}\right)  $ be a complete probability space, $W$ a $d^\prime$-dimensional
Brownian motion defined on $\left(  \Omega,\mathcal{F},\mathbb{P}\right)  $ and $ \mathcal{F}_s^t $ the $\sigma$-algebra generated by the increments of the Brownian motion $W (r \vee t) - W (t)$ with $r \in [0,s]$ and augmented by all $\mathbb{P}$-null sets.
Throughout the paper, we use the filtration $ \mathbb{F}^t = \{ \mathcal{F}_s^t \}_{s \in [0,T]}$ to emphasize the explicit dependence on an arbitrary initial time $t$. In the case $t=0$, we will drop the superscript $0$.

To ensure that the deterministic non-anticipative functional $u$, given by the representation formula \eqref{fk} is a solution to the nonlinear PDKE \eqref{PDKE}, the coefficients and functions involved in the PDKE and the decoupled FBSDE \eqref{FBSDE} are required to satisfy the following conditions.

 We suppose that there exist constants $L$, $M$, $\tilde{M}$, $L$, $\tilde{L}>0$, bounded measurable functions $L_1, \tilde{L}_1: \Gamma  \rightarrow\mathbb{R}_{+}$ and $\rho$, $\tilde{\rho}$ probability
measures on $ \mathcal{B}\left(  \left[  -\delta,0\right]
\right) $ such that:

\begin{itemize}

\item[$\mathrm{(A}_{1}\mathrm{)}$]  The functions $b: [0, T] \times \Lambda \to \mathbb{R}^d$ and $\sigma: [0, T] \times \Lambda \to \mathbb{R}^{d \times d'}$ are bounded and satisfy for all $t \in [0,T]$ and  $\phi_1,\phi_2 \in \Lambda$,
    \begin{equation}
    \label{ass1}
    |b(t, \phi_1) - b(t, \phi_2)| \leq L ||\phi_1 - \phi_2||,  \qquad
    |\sigma(t, \phi_1) - \sigma(t, \phi_2)| \leq L ||\phi_1 - \phi_2||.
    \end{equation}

\item[$\mathrm{(A}_{2}\mathrm{)}$] The function $h:\Lambda \times \Gamma \to\mathbb{R}$  is continuous and $|h(\psi )|\leq M(1+\left\Vert \psi \right\Vert^{p} ),$ for all $\psi \in {\Lambda \times \Gamma}.$

\item[$\mathrm{(A}_{3}\mathrm{)}$]  The function $f:[0,T] \times \Lambda \times \Gamma \times \mathbb{R} \times \mathbb{R}^{d^\prime} \times L^{2}\left( [-\delta ,0];\mathbb{R} \right) \rightarrow \mathbb{R}$ satisfies the following:
\begin{equation*}
\begin{array}{rl}
\left( i\right) & \displaystyle (\phi, \varphi) \mapsto f\left(t,\phi, \varphi ,y,z, \hat{y}\right) \
\text{ is continuous,}\smallskip \\
\left( ii\right) & \displaystyle|f(t,\phi ,\varphi,y,z,\hat{y})-f(t,\phi, \varphi
,y^{\prime },z^{\prime },\hat{y})|\leq L \left( |y-y^{\prime
}|+|z-z^{\prime }|\right),\smallskip \\
\multicolumn{1}{l}{\left( iii\right)} & \displaystyle|f(t,\phi ,\varphi,y,z,\hat{y})-f(t,\phi,\varphi,y,z,\hat{y}^{\prime })|^{2} \leq L_1 (\varphi)\int_{-\delta }^{0}
\left\vert \hat{y}(\theta )-\hat{y}^{\prime }(\theta )\right\vert
^{2} \rho (d\theta ),\smallskip \\
\left( iv\right) & \displaystyle\left\vert f\left( t,\phi, \varphi,0,0,0\right)
\right\vert \leq M(1+\left\Vert \phi \right\Vert + \Vert \varphi \Vert)
\end{array}%
\end{equation*}
for any $t\in \lbrack
0,T]$, $\phi \in {\Lambda }$, $\varphi \in \Gamma$, $\left( y,z\right) ,(y^{\prime
},z^{\prime })\in \mathbb{R} \times \mathbb{R}^{d'}$ and $\hat{y},\hat{y}^{\prime }\in L^{2}\left( [-\delta ,0];\mathbb{R} \right) $.

\item[$\mathrm{(A}_{4}\mathrm{)}$] The function $g:[0,T] \times \Lambda \times \Gamma \times \mathbb{R}  \times L^{2}\left( [-\delta ,0];\mathbb{R} \right) \rightarrow \mathbb{R}$ satisfies the following:
\[%
\begin{array}
[c]{rl}%
\left(  i\right) & \displaystyle (\phi, \varphi) \mapsto g\left(t,\phi ,\varphi,y, \hat{y}\right)
\text{ is continuous,}\smallskip \\
\left(  ii\right)  & \displaystyle|g(t, \phi,\varphi, y,\hat{y})-g(t,\phi,\varphi,y^{\prime},\hat{y}%
)|\leq\tilde{L}|y-y^{\prime}|,\medskip\\
\left(  iii\right)  & \displaystyle|g(t,\phi,\varphi,y,\hat{y})-g(t,\phi,\varphi, y,\hat{y}^{\prime}%
)|^{2}\leq\tilde{L}_1 ( \varphi)  \int_{-\delta}^{0}\left\vert \hat
{y}(\theta)-\hat{y}^{\prime}(\theta)\right\vert ^{2}\tilde{\rho}%
(d\theta),\medskip\\
\left(  iv\right)  & \displaystyle\left\vert g\left( t,\phi ,\varphi,0,0\right)
\right\vert <\tilde{M}(1+\left\Vert \phi \right\Vert  + \Vert \varphi \Vert)
\end{array}
\]
for any $t\in \lbrack
0,T]$, $\phi \in {\Lambda }$, $\varphi \in \Gamma$, $y,y^{\prime}\in\mathbb{R}$ and $\hat{y},\hat{y}^{\prime}\in L^{2}\left(
[-\delta,0];\mathbb{R}\right)  $.

\end{itemize}

These assumptions on coefficients are imposed throughout the paper in order to enforce the existence and uniqueness of the solution to the FBSDE \eqref{FBSDE}.

The spaces for the solutions of the FBSDE \eqref{FBSDE} are defined as follows:

\begin{itemize}
\item[\textrm{(i)}] For $m \geq 1$ and a subset $B \subset \mathbb{R}^m$, $\mathbb{S}^2 (B) $  denotes the space of continuous
$\mathbb{F}$-progressively measurable processes $X:\Omega\times\left[
0,T\right]  \rightarrow B$ such that
$$\mathbb{E} \Bigg[ \sup_{s \in [0,T]}|X(s)|^2  \Bigg] < \infty \, ; $$

\item[\textrm{(ii)}] $\mathbb{H}^2 $
denotes the space of
$\mathbb{F}$-progressively measurable processes $Z:\Omega\times\left[
0,T\right]  \rightarrow\mathbb{R}^{d'}$ such that
\[
\mathbb{E}\left[  \int_{0}^{T}|Z(s)|^{2}ds\right]<+\infty\,.
\]
\end{itemize}

\section{The Forward Reflected SDE}
\label{sec:forward}

In this section, we focus on the forward Eq. of system \eqref{FBSDE}
\begin{equation}
   \label{forward2}
\left\{
\begin{array}{l}
\displaystyle X^{t,\psi}\left( s\right) =\phi \left( t\right) - \varphi \left( t\right)
+\int_{t}^{s}b(r,X^{t,\psi})dr+\int_{t}^{s}\sigma (r,X^{t,\psi})dW\left(
r\right) + K^{t,\psi} (s) \medskip \\
\displaystyle
K^{t,\psi}(s) = \varphi
 (t) + \int_{t}^{s}\nu(X^{t,\psi}\left( r\right) ) \mathds{1}_{\left\{ X^{t,\psi
}\left( r\right) \in \partial G\right\} } dA^{t,\psi
} \left( r\right), \medskip
\end{array}%
\right.
\end{equation}%
where $K^{t, \psi}$ is a bounded variation process and $A^{t,\psi} (s) = \Vert K^{t,\psi} \Vert_{t,s}$ for $s \in [t,T]$. Recall that $\psi$ denotes the couple of initial paths $(\phi, \varphi) \in \Lambda \times \Gamma$.

Correspondingly, we will employ the notation $\mathbf{X}^{t,\psi} = ({X}^{t,\psi},K^{t,\psi})$ for the solution of the above reflected equation, especially to emphasize the dependence of the BSDE's coefficients on it.

\begin{proposition}[Existence and Uniqueness]%
\label{prop:forward}
Let assumption (A$_1$) hold and let the domain $G\subset\mathbb{R}^d$ be open, bounded and convex.
Then equation~\eqref{forward2} admits a unique strong solution
$\mathbf X^{t,\psi}\in\mathbb{S}^2(\bar G)\times\mathbb{S}^2(\mathbb{R}^d)$.
\end{proposition}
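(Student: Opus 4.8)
The plan is to reduce \eqref{forward2} to the deterministic Skorokhod problem in the convex set $\bar G$ and then close the coupling with the coefficients by a fixed-point argument in $\mathbb{S}^2(\bar G)$, in the spirit of \cite{pa-zh/98}; the path-dependence of $b$ and $\sigma$ is the only genuinely new feature. Observe first that the constraint $K^{t,\psi}(t)=\varphi(t)$ forces $X^{t,\psi}(t)=\phi(t)-\varphi(t)+\varphi(t)=\phi(t)\in\bar G$, so on $[t,T]$ the equation is a reflected SDE started inside the domain, with ``intrinsic'' reflection $\tilde K(s):=K^{t,\psi}(s)-\varphi(t)$ which vanishes at $s=t$ and whose total variation on $[t,s]$ equals $A^{t,\psi}(s)$; on $[0,t)$ the solution is prescribed to be $(\phi,\varphi)$, and the two pieces agree at $t$.

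First I would recall the well-posedness of the Skorokhod problem in convex domains. Convexity yields the fundamental inequality $\langle\nu(x),y-x\rangle\ge 0$ for every $x\in\partial G$ and $y\in\bar G$, since the hyperplane through $x$ orthogonal to $\nu(x)=\nabla\ell(x)$ supports $\bar G$. By Tanaka's theorem (see also \cite{pa-zh/98}), for each continuous $y\colon[t,T]\to\mathbb{R}^d$ with $y(t)\in\bar G$ there is a unique pair $(x,k)$, with $x$ continuous and $\bar G$-valued and $k$ continuous of bounded variation with $k(t)=0$, solving $x=y+k$ and $dk=\nu(x)\mathds{1}_{\{x\in\partial G\}}\,d|k|$; the total variation of $k$ on $[t,T]$ is controlled by the oscillation of $y$, the map is causal, and $(x,k)$ depends measurably on $y$ (by pathwise uniqueness together with penalization). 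Since $\bar G$ is bounded, every such $x$ automatically satisfies $\sup_s|x(s)|\le\sup_{z\in\bar G}|z|<\infty$, which already settles the $\mathbb{S}^2$-integrability of the forward component.

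Next I would set up the map $\mathcal T$ on $\mathbb{S}^2(\bar G)$: given $U$, form $\Theta_U(s):=\phi(t)+\int_t^s b(r,U)\,dr+\int_t^s\sigma(r,U)\,dW(r)$ on $[t,T]$, which is continuous, $\mathbb{F}^t$-adapted and in $\mathbb{S}^2$ by $(\mathrm{A}_1)$ and the Burkholder--Davis--Gundy inequality (using that $b,\sigma$ are non-anticipative); then solve the Skorokhod problem pathwise with input $\Theta_U(\omega)$, obtaining a pair denoted $(X_U,\tilde K_U)$ (with $X_U\in\mathbb{S}^2(\bar G)$ by causality and boundedness of $\bar G$), and set $\mathcal T(U):=X_U$. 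A fixed point of $\mathcal T$, together with $K^{t,\psi}:=\varphi(t)+\tilde K_{X^{t,\psi}}$ on $[t,T]$ and the prescribed extension on $[0,t)$, is exactly a solution of \eqref{forward2}, and conversely, so it suffices to show $\mathcal T$ has a unique fixed point. For $U_1,U_2\in\mathbb{S}^2(\bar G)$, writing $X_i=\mathcal T(U_i)$, $\delta X=X_1-X_2$ and $\tilde K_i=\tilde K_{U_i}$, Itô's formula for $|\delta X|^2$ produces the drift and diffusion differences, bounded by $(\mathrm{A}_1)$; a stochastic-integral term, bounded by BDG and Young; and the reflection term $2\int_t^s\langle\delta X(r),d\tilde K_1(r)-d\tilde K_2(r)\rangle$, which is nonpositive because both $\langle X_1(r)-X_2(r),\nu(X_1(r))\rangle\mathds{1}_{\{X_1(r)\in\partial G\}}$ and $\langle X_2(r)-X_1(r),\nu(X_2(r))\rangle\mathds{1}_{\{X_2(r)\in\partial G\}}$ are $\le 0$ by the convexity inequality ($X_1(r),X_2(r)\in\bar G$). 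This leads to
\[
\mathbb E\Big[\sup_{r\in[t,s]}|\delta X(r)|^{2}\Big]\le C\,\mathbb E\int_{t}^{s}\sup_{u\in[t,r]}|U_1(u)-U_2(u)|^{2}\,dr ,
\]
and iterating this bound (equivalently, working with the weighted norm $\mathbb E\sup_{s}e^{-\beta s}|\cdot|^{2}$ for $\beta$ large) makes a power of $\mathcal T$ a contraction on $\mathbb{S}^2(\bar G)$; its unique fixed point is $X^{t,\psi}$. Finally $K^{t,\psi}=X^{t,\psi}-\Theta_{X^{t,\psi}}+\varphi(t)\in\mathbb{S}^2(\mathbb{R}^d)$ since $X^{t,\psi}$ is bounded and $\Theta_{X^{t,\psi}}\in\mathbb{S}^2$, while $A^{t,\psi}(T)$ is a.s.\ finite with finite second moment (from the Skorokhod estimate and the boundedness of $b,\sigma$); hence $\mathbf X^{t,\psi}\in\mathbb{S}^2(\bar G)\times\mathbb{S}^2(\mathbb{R}^d)$.

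The main obstacle is the reflection term: it cannot be controlled through Lipschitz continuity of $\nu$ near $\partial G$, and has to be absorbed through the convexity of $G$, which is precisely what makes $\langle\delta X,d(\tilde K_1-\tilde K_2)\rangle\le 0$. A secondary difficulty is that the full-path, non-anticipative Lipschitz dependence of $b$ and $\sigma$ rules out the naive ``short-time contraction plus concatenation'' scheme and forces the iterated-contraction / weighted-norm argument.
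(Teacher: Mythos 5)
Your argument is correct and follows the same route the paper indicates: the paper omits the proof, citing Lions--Sznitman, Saisho and \cite{zalinescu}, and merely sketches ``Skorokhod problem plus Picard iteration,'' which is exactly what you carry out in detail. The one point worth noting is that your treatment of the reflection term is actually more careful than the paper's one-line justification: rather than invoking Lipschitz continuity of the Skorokhod map in the uniform norm (which is delicate for general convex domains), you absorb the term $\int\langle\delta X,\,d(\tilde K_1-\tilde K_2)\rangle$ via the convexity inequality $\langle y-x,\nu(x)\rangle\ge 0$ inside It\^o's formula, which is the robust (Tanaka-style) way to close the contraction estimate and is consistent with how the paper itself uses inequality \eqref{inequality7} in the proof of Proposition \ref{prop:generalized_3.1_corrected}.
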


The result is a direct consequence of the classical theory of reflected SDEs in convex domains.
Indeed, owing to convexity, the pair $(G,\nu)$ satisfies the conditions of Lions--Sznitman~\cite[Thm.~3.1]{Sznitman} and Saisho~\cite[Thm.~4.1]{Saisho}:
the boundary $\partial G$ is of class $C^2_b$ (via the defining function $\ell$), the coefficients $b$ and $\sigma$ are Lipschitz continuous on $\Lambda$ (assumption~(A$_1$)), and the normal vector $\nu(x)=\nabla\ell(x)$ is Lipschitz on $\partial G$ with $|\nu(x)|=1$.
Under these assumptions the Skorokhod map is Lipschitz in the uniform norm and the standard Picard iteration yields a unique strong solution.

Proposition \ref{prop:forward} is also a particular case of Theorem 2.3 in \cite{zalinescu} and therefore the proof is omitted.

The following result represents the path-dependent analogue of Propositions 3.1 and 3.2 in \cite{pa-zh/98}.
\begin{proposition} \label{prop:generalized_3.1_corrected}
Under assumption $(A_1)$, for each $p > 1$ there exists a constant \( C_p > 0 \) such that for each \( 0 \leq t \leq T \),  for all initial paths $ \phi_1, \phi_2 \in \Lambda$ and $\varphi \in \Gamma$, we have:
\begin{equation}
    \label{est1}
\mathbb{E} \left[ \sup_{s \in [0,T]} \left(| X^{t,(\phi_1, \varphi)}(s) - X^{t,(\phi_2,  \varphi)}(s)| + | A^{t,(\phi_1,\varphi)}(s) - A^{t,(\phi_2,\varphi)}(s) |\right)^p  \right] \leq C_p \left\| \phi_1 - \phi_2 \right\|^p \, .
\end{equation}

Moreover, for any \( q > 0 \) there exists a constant $C(q)$ such that for any $\psi \in \Lambda \times \Gamma$:
\begin{equation}
\label{estimate2}
 \mathbb{E}\left[ e^{q A^{t,\psi}(T)} \right] < C (q) .
\end{equation}
\end{proposition}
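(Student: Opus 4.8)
\medskip

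The plan is to establish the two estimates separately, both by adapting the classical arguments of Pardoux--Zhang to the path-dependent setting, exploiting that in \eqref{est1} the initial reflection path $\varphi$ is held fixed, so the shift $\phi_i(t)-\varphi(t)$ differs between the two forward equations only through $\phi_i(t)$.

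\medskip

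\textbf{Step 1: Lipschitz dependence on the initial path.} Write $\Delta X(s) := X^{t,(\phi_1,\varphi)}(s) - X^{t,(\phi_2,\varphi)}(s)$ and similarly $\Delta K$, $\Delta A$. On $[0,t)$ both processes equal $(\phi_i(s),\varphi(s))$, so $\|\Delta X\|_{0,t}\le\|\phi_1-\phi_2\|$ and $\Delta A\equiv 0$ there; it remains to control the interval $[t,T]$. Since the two equations share the same Brownian motion, $b$, $\sigma$ and the same reflecting domain $(G,\nu)$, I would apply the Lipschitz property of the Skorokhod map in the uniform norm (the very estimate underlying Proposition \ref{prop:forward}): there is a deterministic constant $c_T$, depending only on $G$, $T$ and the Lipschitz/boundedness constants in (A$_1$), with
\[
\sup_{r\in[t,s]}|\Delta X(r)|^{2}
\;\le\; c_T\Big( |\phi_1(t)-\phi_2(t)|^{2}
+ \Big|\int_t^{\cdot}(b(r,X^{t,(\phi_1,\varphi)})-b(r,X^{t,(\phi_2,\varphi)}))\,dr\Big|_{\sup[t,s]}^{2}
+ \Big|\int_t^{\cdot}(\sigma(\cdot)-\sigma(\cdot))\,dW\Big|_{\sup[t,s]}^{2}\Big).
\]
Taking $\mathbb{E}[\sup_{r\in[t,s]}(\cdot)^{p/2 \vee 1}]$, using Burkholder--Davis--Gundy on the stochastic term, Jensen on the drift term, and the Lipschitz bounds in (A$_1$) to get $|b(r,X^{t,(\phi_1,\varphi)})-b(r,X^{t,(\phi_2,\varphi)})|\le L\sup_{u\le r}|\Delta X(u)|$ (and likewise for $\sigma$), one arrives at
\[
\mathbb{E}\!\left[\sup_{r\in[t,s]}|\Delta X(r)|^{p}\right]
\le C_p\,\|\phi_1-\phi_2\|^{p} + C_p\int_t^{s}\mathbb{E}\!\left[\sup_{u\in[t,r]}|\Delta X(u)|^{p}\right]dr,
\]
and Gronwall's lemma closes the bound for $\Delta X$. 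The bound for $\Delta A$ then follows from the identity $\Delta K(s) = \Delta X(s) - (\phi_1(t)-\phi_2(t)) - \int_t^s(b(r,X^{t,(\phi_1,\varphi)})-b(r,X^{t,(\phi_2,\varphi)}))dr - \int_t^s(\sigma(r,X^{t,(\phi_1,\varphi)})-\sigma(r,X^{t,(\phi_2,\varphi)}))dW(r)$, together with the fact that $A$ is the total variation of $K$ and $\varphi$ is common, so $|\Delta A(s)|\le 2\sup_{r\in[t,s]}|\Delta K(r)|$ pathwise after using the standard comparison for total variations of semimartingales reflected on a convex set (the increase of $A$ only happens on $\{X\in\partial G\}$, which lets one bound $\Delta A$ by the oscillation of $\Delta K$). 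Collecting the two gives \eqref{est1}.

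\medskip

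\textbf{Step 2: Exponential moments of the total variation.} For \eqref{estimate2} I would follow the now-classical trick based on the defining function $\ell\in\mathcal{C}^2_b$. Apply It\^o's formula to $\ell(X^{t,\psi}(s))$ on $[t,T]$:
\[
\ell(X^{t,\psi}(T)) = \ell(X^{t,\psi}(t)) + \int_t^T\!\langle\nabla\ell(X^{t,\psi}(r)),b(r,X^{t,\psi})\rangle dr
+ \tfrac12\!\int_t^T\!\mathrm{Tr}[\sigma\sigma^{*}\partial^2\ell]\,dr
+ \int_t^T\!\langle\nabla\ell(X^{t,\psi}(r)),\sigma(r,X^{t,\psi})dW(r)\rangle
+ \int_t^T\!|\nu(X^{t,\psi}(r))|^{2}\mathds{1}_{\{X^{t,\psi}(r)\in\partial G\}}dA^{t,\psi}(r),
\]
where I used that $dK^{t,\psi}(r)=\nu(X^{t,\psi}(r))\mathds{1}_{\{\cdot\in\partial G\}}dA^{t,\psi}(r)$ and $\nabla\ell=\nu$ on $\partial G$. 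Since $|\nu|=1$ on $\partial G$, the last integral is exactly $A^{t,\psi}(T)-A^{t,\psi}(t)=A^{t,\psi}(T)$ (recall $A^{t,\psi}(t)=0$ with the convention in \eqref{forward2}, the $\varphi(t)$ being the starting value of $K$ not of $A$). Hence
\[
A^{t,\psi}(T) = \ell(X^{t,\psi}(T)) - \ell(\phi(t)-\varphi(t)) - \int_t^T\!\big(\langle\nabla\ell,b\rangle + \tfrac12\mathrm{Tr}[\sigma\sigma^{*}\partial^2\ell]\big)dr - \int_t^T\!\langle\nabla\ell,\sigma\,dW\rangle.
\]
Because $\ell$, $\nabla\ell$, $\partial^2\ell$ are bounded and $b$, $\sigma$ are bounded by (A$_1$), the first three terms are bounded by a deterministic constant $C_0$ (independent of $\psi$, using $\bar G$ compact), so
\[
q\,A^{t,\psi}(T) \le qC_0 + \Big|q\!\int_t^T\!\langle\nabla\ell(X^{t,\psi}(r)),\sigma(r,X^{t,\psi})\rangle dW(r)\Big|.
\]
The stochastic integral $N_s:=\int_t^s\langle\nabla\ell(X^{t,\psi}),\sigma\,dW\rangle$ is a continuous martingale with $\langle N\rangle_T\le C_1(T-t)\le C_1 T$ deterministically (again by boundedness of $\nabla\ell$ and $\sigma$). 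Therefore $\mathbb{E}[e^{\lambda N_T}]=\mathbb{E}[e^{\lambda N_T - \frac{\lambda^2}{2}\langle N\rangle_T}e^{\frac{\lambda^2}{2}\langle N\rangle_T}]\le e^{\frac{\lambda^2}{2}C_1 T}$ for every $\lambda$, since the Dol\'eans exponential is a true martingale (Novikov's condition holds trivially). Applying this with $\lambda=\pm q$ and bounding $|N_T|\le N_T^{+}+N_T^{-}$ gives $\mathbb{E}[e^{q|N_T|}]\le 2e^{\frac{q^2}{2}C_1 T}$, whence
\[
\mathbb{E}\big[e^{q A^{t,\psi}(T)}\big] \le e^{qC_0}\,\mathbb{E}\big[e^{q|N_T|}\big] \le 2\,e^{qC_0 + \frac{q^2}{2}C_1 T} =: C(q),
\]
a constant depending only on $q$, $T$, $\|\ell\|_{\mathcal{C}^2_b}$ and the bounds on $b,\sigma$, but not on $(t,\psi)$. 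This proves \eqref{estimate2}.

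\medskip

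\textbf{Main obstacle.} The delicate point is Step 1, specifically passing from the Lipschitz bound on $\Delta X$ in the uniform norm to the Lipschitz bound on $\Delta A$. One must be careful that the Skorokhod-map Lipschitz estimate is typically stated for the reflected process and its regulator $K$, not directly for the total-variation process $A$; since $A$ is a nonlinear (total variation) functional of $K$, controlling $\|\Delta A\|$ requires either invoking the convexity of $G$ (which makes the normal cone monotone and yields a deterministic bound on increments of $A$ in terms of the data, as in Lions--Sznitman) or, alternatively, reproving the estimate via the semimartingale decomposition of $K$ as above and the observation that on the (common) convex domain the measure $dA$ charges only $\{X\in\partial G\}$. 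Getting the $p$-th moment uniform in $t\in[0,T]$ is then a matter of noting that every constant produced is monotone in the length $T-t\le T$.
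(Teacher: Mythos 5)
Your Step 2 is essentially the paper's own argument: the identity $A^{t,\psi}(T)=\ell(X^{t,\psi}(T))-\ell(X^{t,\psi}(t))-\int_t^T\mathcal{L}\ell\,dr-\int_t^T\nabla\ell\,\sigma\,dW$ combined with an exponential-martingale/Novikov bound on the stochastic integral, and it is correct (the slip $\ell(\phi(t)-\varphi(t))$ in place of $\ell(X^{t,\psi}(t))=\ell(\phi(t))$ is harmless since $\ell$ is bounded). The first half of Step 1 is also fine and close to the paper, which uses the convexity inequality $\langle x_2-x_1,\nabla\ell(x_1)\rangle\ge 0$ plus It\^o, Doob/BDG and Gronwall rather than a Skorokhod-map Lipschitz bound, but both routes yield \eqref{3.2forX}.

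The genuine gap is in your treatment of $\Delta A$. The inequality $|\Delta A(s)|\le 2\sup_{r\in[t,s]}|\Delta K(r)|$ is not true: $A^{t,\psi}$ is the \emph{total variation} of $K^{t,\psi}$, and the difference of two total variations is controlled by the total variation of the difference, not by its supremum; there is no ``standard comparison'' that converts oscillation of $\Delta K$ into a bound on $\Delta A$, even on a convex domain, because the two regulators charge different boundary sets $\{X^{t,(\phi_i,\varphi)}\in\partial G\}$. The correct argument --- which you already have in hand, since you derive the relevant identity in Step 2 and mention it as an ``alternative'' in your closing paragraph --- is to use
\begin{equation*}
A^{t,(\phi,\varphi)}(s)=\ell\bigl(X^{t,(\phi,\varphi)}(s)\bigr)-\ell(\phi(t))-\int_t^s\mathcal{L}\ell\bigl(X^{t,(\phi,\varphi)}(r)\bigr)\,dr-\int_t^s\nabla\ell\bigl(X^{t,(\phi,\varphi)}(r)\bigr)\sigma\bigl(r,X^{t,(\phi,\varphi)}\bigr)\,dW(r)
\end{equation*}
as the \emph{primary} route: subtracting the two identities, every term on the right is Lipschitz in $X^{t,(\phi_i,\varphi)}$ (using $\ell\in\mathcal{C}^2_b$, (A$_1$) and BDG for the stochastic integral), so \eqref{3.2forK} follows directly from \eqref{3.2forX}. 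This is exactly what the paper does; promote your ``alternative'' to the main argument and delete the total-variation comparison claim.
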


\begin{proof}
The convexity of $G$ implies that
\begin{equation}
\label{inequality7}
   \langle x_2 - x_1 , \nabla\ell (x_1 ) \rangle \geq 0 \, ,\forall x_1 \in \partial G, \forall x_2 \in G.
\end{equation}

So we can proceed similarly to Prop. 3.1 of \cite{pa-zh/98} or to Lemma 3.1 of \cite{Sznitman} (p. 524-525). By applying It\^o's formula  and by Doob's inequality, the following holds for any $p>1$:
\begin{multline}
\label{inequality8}
\mathbb{E} \left[ \sup_{s \in [0,T]} | X^{t,(\phi_1, \varphi)}(s) - X^{t,(\phi_2,  \varphi)}(s)|^p   \right] \leq C_{1,p} \left\| \phi_1 - \phi_2 \right\|^{p} \\
+ C_{2,p} \mathbb{E} \int_0^s | X^{t,(\phi_1, \varphi)}(r) - X^{t,(\phi_2,  \varphi)}(r)|^p  dr  \, .
\end{multline}

From Gronwall's lemma, we conclude that
\begin{equation}
    \label{3.2forX}
    \mathbb{E} \left[ \sup_{s \in [0,T]} {| X^{t,(\phi_1, \varphi)}(s) - X^{t,(\phi_2,  \varphi)}(s)|  }^p  \right] \leq C_{3,p} \left\| \phi_1 - \phi_2 \right\|^p \, .
\end{equation}

Concerning $A^{t, \psi}$, a similar argument as in \cite{pa-zh/98} (p.551) shows that
\begin{multline}
\label{inequality9}
A^{t,(\phi,\varphi)}(s) = \ell \left( X^{t,(\phi,\varphi)}(s) \right) - \ell \left( \phi(t) \right) - \int_t^s \mathcal{L} \, \, \ell   \left( X^{t,(\phi,\varphi)}(r) \right)  dr \\
- \int_t^s \nabla \ell  \left(  X^{t,(\phi,\varphi)}(r)\right) \sigma \left( r, X^{t,(\phi,\varphi)}\right) dW (r),
\end{multline}
where $\mathcal{L}$ is the second order operator defined in Eq. \eqref{operator}.

In the same way as in Prop. 3.2 in \cite{pa-zh/98}, by Eq. \eqref{3.2forX} and Eq. \eqref{inequality9}, we deduce that
\begin{equation}
    \label{3.2forK}
    \mathbb{E} \left[ \sup_{s \in [0,T]} {| A^{t,(\phi_1, \varphi)}(s) - A^{t,(\phi_2,  \varphi)}(s)|  }^p  \right] \leq C_{4,p} \left\| \phi_1 - \phi_2 \right\|^p \, .
\end{equation}

Combining Eq. \eqref{3.2forX} and Eq. \eqref{3.2forK} proves estimate \eqref{est1}.

To prove the exponential bound \eqref{estimate2}, we use that
$$
A_s^{t,\psi}
\;\;=\;\;
\int_t^s \nabla \ell (X^{t,\psi} _r)
\,dK_r^{t,\psi}.
$$
We define the process
\[
Y_s = \exp\left( - q \int_t^s \nabla \ell \left( X^{t,\psi}(r) \right) \sigma\left( r, X^{t,\psi} \right) dW(r) \right),
\]
with \( q > 0 \). Since the Novikov's condition
$$
\mathbb{E} \left[\exp  \dfrac{1}{2} \int_t^T  q^2 | \nabla \ell (X^{t,\psi}(s))  \sigma (s, X^{t,\psi}) |^2 ds   \right] < + \infty
$$
is clearly satisfied, it follows that
$$
Y_s  \left[\exp  \left\{ - \dfrac{1}{2} \int_t^s  q^2 | \nabla \ell (X^{t,\psi}(r)) \sigma (r, X^{t,\psi}) |^2 dr \right\}   \right]
$$
is a martingale.

This implies that
$$
\mathbb{E} \left[ Y_T   \, \exp  \left\{ - \dfrac{1}{2} \int_t^T  q^2 | \nabla \ell (X^{t,\psi}(r)) \sigma (r, X^{t,\psi})dr | ^2\right\}   \right] = 1.
$$

 From Eq. \eqref{inequality9} we infer that
$$ \mathbb{E}\left[ e^{q A^{t,\psi}(T)} \right] = \mathbb{E} \left[ Y_T  \, \exp \,  q \left\{
\ell \left( X^{t,(\phi,\varphi)}(T) \right) - \ell \left( \phi(t) \right) - \int_t^T \mathcal{L} \, \ell   \left( X^{t,(\phi,\varphi)}(r) \right)  dr  \right\} \right]  . $$

Since $\ell ({X^{t,\psi}}) $ is bounded, estimate \eqref{estimate2} is proved.
\end{proof}

\subsection{Tower Property}

 We enlarge the state space and look at $\mathbf{X}^{t,\psi}$ defined in Eq. \eqref{forward2} as a process of the path (i.e. an infinite-dimensional state space) to recover the {\it Markov} property for path-dependent SDE.

\begin{proposition}[Tower Property]
\label{tower_prop}
Given an initial data $t \in [0,T]$ and $\psi \in \Lambda \times \Gamma$, the following equality holds for $t \leq r \leq T$, a.s.:
\begin{equation}
    \label{tower1}
\mathbf{X}^{t,\psi} =   \mathbf{X}^{r, \mathbf{X}^{t,\psi}} \, .
\end{equation}
\end{proposition}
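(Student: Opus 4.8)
The plan is to deduce \eqref{tower1} from pathwise uniqueness, which is the mechanism behind the flow property of SDEs, here adapted to the reflected and path-dependent equation \eqref{forward2}. Set $\psi' := \mathbf{X}^{t,\psi} = (X^{t,\psi},K^{t,\psi}) \in \Lambda\times\Gamma$; this is the (random) path that will play the role of initial datum at time $r$. On $[0,r)$ the definition of the solution map in \eqref{FBSDE} forces $\mathbf{X}^{r,\psi'}(s) = \psi'(s) = \mathbf{X}^{t,\psi}(s)$, and by continuity the identity extends to $s=r$; in particular $X^{r,\psi'}(r)=\phi'(r)=X^{t,\psi}(r)$ and $K^{r,\psi'}(r)=\varphi'(r)=K^{t,\psi}(r)$. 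It therefore suffices to prove \eqref{tower1} on $[r,T]$.

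On $[r,T]$ I would verify that the process $\mathbf{X}^{t,\psi}$ is itself a solution of \eqref{forward2} started at time $r$ from $\psi'$. Subtracting the defining relation for $X^{t,\psi}$ at $s$ and at $r$ and using additivity of the Lebesgue and stochastic integrals gives, for $s\in[r,T]$,
\[
X^{t,\psi}(s)-K^{t,\psi}(s) = \big(X^{t,\psi}(r)-K^{t,\psi}(r)\big) + \int_r^s b(u,X^{t,\psi})\,du + \int_r^s \sigma(u,X^{t,\psi})\,dW(u),
\]
which is exactly the first line of \eqref{forward2} with initial time $r$ and datum $\psi'$, since $\phi'(r)-\varphi'(r)=X^{t,\psi}(r)-K^{t,\psi}(r)$. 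For the reflection term, additivity of the total variation yields $\|K^{t,\psi}\|_{t,s}=\|K^{t,\psi}\|_{t,r}+\|K^{t,\psi}\|_{r,s}$, so $dA^{t,\psi}$ and $dA^{r,\psi'}$ coincide as measures on $[r,T]$; combined with $K^{t,\psi}(s)-K^{t,\psi}(r)=\int_r^s \nu(X^{t,\psi}(u))\mathds{1}_{\{X^{t,\psi}(u)\in\partial G\}}\,dA^{t,\psi}(u)$ and the constraint $X^{t,\psi}(u)\in\bar G$, this reproduces the second line of \eqref{forward2} with datum $\psi'$.

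The only point that requires care is the filtration bookkeeping. For $s\geq r$ one has $\mathcal{F}^t_s=\mathcal{F}^t_r\vee\mathcal{F}^r_s$, so $\mathbf{X}^{t,\psi}|_{[r,T]}$ is adapted to the enlarged filtration $\mathbb{G}_s:=\mathcal{F}^t_r\vee\mathcal{F}^r_s$, with respect to which $(W(\cdot)-W(r))|_{[r,T]}$ remains a Brownian motion independent of the now-$\mathbb{G}_r$-measurable datum $\psi'$. By definition $\mathbf{X}^{r,\mathbf{X}^{t,\psi}}$ is the process obtained by inserting the random path $\psi'$ into the (measurable) solution functional of the time-$r$ equation, and a standard conditioning argument — using independence of $\psi'$ from the increments of $W$ after $r$ — shows this process is again a $\mathbb{G}$-adapted solution of \eqref{forward2} started at $r$ from $\psi'$. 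Pathwise uniqueness for \eqref{forward2}, valid in any filtration because the Skorokhod map on a convex domain is deterministic and Lipschitz in the uniform norm (the content of Proposition~\ref{prop:forward}, cf.\ Lions--Sznitman \cite{Sznitman} and Saisho \cite{Saisho}), then forces $\mathbf{X}^{t,\psi}=\mathbf{X}^{r,\mathbf{X}^{t,\psi}}$ a.s.\ on $[r,T]$; together with the agreement on $[0,r]$ this proves \eqref{tower1}. The main obstacle is exactly this transfer of uniqueness to the enlarged filtration carrying a random initial path — the algebraic check that $\mathbf{X}^{t,\psi}$ solves the shifted equation being routine once integral- and variation-additivity are invoked.
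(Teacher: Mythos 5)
Your overall strategy is the same as the paper's: show that both $\mathbf{X}^{t,\psi}$ restricted to $[r,T]$ and $\mathbf{X}^{r,\mathbf{X}^{t,\psi}}$ solve the reflected equation \eqref{forward2} restarted at time $r$ from the random path $\mathbf{X}^{t,\psi}$, and conclude by uniqueness. The two halves you separate out are both present (the paper leaves the ``algebraic'' half --- additivity of the integrals and of the total variation, so that $\mathbf{X}^{t,\psi}|_{[r,T]}$ solves the shifted equation --- implicit, and your making it explicit is a small improvement). The point to flag is that the step you dispose of as ``a standard conditioning argument using independence of $\psi'$ from the increments of $W$ after $r$'' is precisely where the paper spends essentially all of its effort: one must justify that inserting the random, $\mathcal{F}^t_r$-measurable path $\mathbf{X}^{t,\psi}$ into the time-$r$ solution functional is legitimate for the stochastic integral, which is not defined pathwise. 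The paper does this by first checking joint measurability of $(\omega,\psi)\mapsto\mathbf{X}^{r,\psi}(\omega,\cdot\wedge s)$ and the identity $X^{r,\mathbf{X}^{t,\psi}}=X^{r,\mathbf{X}^{t,\psi}(\cdot\wedge r)}$, then approximating $\int_r^s\sigma(u,X^{r,\psi}(u))\,dW(u)$ by Riemann sums $I^{N,\psi}(s)$ and using the independence of $\mathcal{F}^r_s$ and $\mathcal{F}^t_r$ to show that substitution of $\psi=\mathbf{X}^{t,\bar\psi}(\cdot\wedge r)$ commutes with the limit in probability (a freezing lemma). Your write-up correctly identifies both the obstacle and the mechanism (independence), so the approach is sound, but as it stands the proposal compresses the entire content of the paper's proof into one unproved sentence; a complete version would have to carry out that substitution argument (or cite a freezing lemma for stochastic integrals with parameters). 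Your remark that pathwise uniqueness transfers to the enlarged filtration $\mathcal{F}^t_r\vee\mathcal{F}^r_s$ because the Skorokhod problem on a convex domain is solved by a deterministic Lipschitz map (Proposition~\ref{prop:forward}, \cite{Sznitman}) is a clean way to justify the final appeal to uniqueness, which the paper states more tersely.
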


This type of result is quite standard for It\^{o} diffusions. For the sake of the reader, we provide a proof for the above context.

\begin{proof}
Let us fix $t\in\lbrack0,T]$ and $r\in\lbrack t,T]$. We have, for $\psi
=(\phi,\varphi)\in\Lambda\times\Gamma$, $\mathbb{P}$-a.s.,
\[
\left\{
\begin{array}
[c]{l}%
\displaystyle X^{r,\psi}(s)=\phi(r)-\varphi(r)+\int_{r}^{s}b\left(  u,X^{r,\psi}(u)\right)  du+\int%
_{r}^{s}\sigma\left(  u,X^{r,\psi}(u)\right)  dW(u)+K^{r,\psi}(s),\quad\\
\hfill\forall s\in\lbrack r,T];\medskip\\
\displaystyle K^{r,\psi}(s)=\varphi(r)+\int_{r}^{s}\nu(X^{r,\psi}(u))\mathds{1}_{\{X^{r,\psi
}(u)\in\partial G\}}dA^{r,\psi}(s),\ \forall s\in\lbrack r,T];\medskip\\
\mathbf{X}^{r,\psi}(s)=(\phi(s),\varphi(s)),\ \forall s\in\lbrack0,r].\medskip
\end{array}
\right.
\]
We would like to show that we can replace $\psi$ by $\mathbf{X}^{t,\psi}$.
This is straightforward for the deterministic integrals, the only difficulty
occuring from the stochastic integral, which is not pathwise defined.

In this regard, we first remark that the continuous process $\mathbf{X}^{r,\mathbf{X}^{t,\psi
}}$ is $\mathbb{F}^{0}$-adapted, so the stochastic integral $\int_{r}%
^{s}\sigma\bigl(  u,X^{r,\mathbf{X}^{t,\psi}}(u)\bigr)  dW(u)$ is well-defined. Indeed, for fixed $s\in[r,T]$ and $\bar{\psi}\in\Lambda\times\Gamma$, we have $X^{r,\mathbf{X}^{t,\bar{\psi}}}=X^{r,\mathbf{X}^{t,\bar{\psi}}(\cdot\wedge r)}$ and
the mappings $(\omega,\psi)\mapsto \mathbf{X}^{r,\psi}(\omega,s)$ and $\mathbf{X}^{t,\bar{\psi}}(\cdot\wedge r)$ are $\mathcal{F}^r_s\otimes \mathcal{B}(\Lambda\times\Gamma)/\mathcal{B}(\mathbb{R}^d)$-measurable, respectively $\mathcal{F}^t_r/\mathcal{B}(\Lambda\times\Gamma)$-measurable.

Let us denote $I^{\psi}(s):=\int_{r}^{s}\sigma\left(
u,X^{r,\psi}(u)\right)  dW(u)$, $s\in\lbrack r,T]$, for $\psi=(\phi
,\varphi)\in\Lambda\times\Gamma$. For every $N\in\mathds{N}$, let us consider
the partition $r=t_{0}<t_{1}<\ldots<t_{N}=s$. We approximate $I^{\psi}$ by
\[
I^{N,\psi}(s):=\sum_{k=1}^{N}\sigma\left(  t_{k-1},X^{r,\psi}(t_{k-1})\right)
\left(  W(t_{k})-W(t_{k-1})\right)  .
\]

Obviously, $I^{N,\psi}(s)\rightarrow I^{\psi
}(s)$ as $N\rightarrow\infty$ in probability for every $\psi\in\Lambda
\times\Gamma$.

Let us now fix $\bar{\psi}\in\Lambda\times\Gamma$. For every $\varepsilon>0$,
we have that
\begin{align*}
\mathbb{P}(\lvert I^{N,\mathbf{X}^{t,\bar{\psi}}}(s)-I^{\mathbf{X}%
^{t,\bar{\psi}}}(s)\rvert\geq\varepsilon)  & =\mathbb{E}[\mathds{1}_{\{\lvert
I^{N,\mathbf{X}^{t,\bar{\psi}}}(s)-I^{\mathbf{X}^{t,\bar{\psi}}}(s)\rvert
\geq\varepsilon\}}]\\
& =\mathbb{E}\left[  \mathbb{E}[\mathds{1}_{\{\lvert I^{N,\mathbf{X}%
^{t,\bar{\psi}}(\cdot\wedge r)}(s)-I^{\mathbf{X}^{t,\bar{\psi}}(\cdot\wedge
r)}(s)\rvert\geq\varepsilon\}}\mid\mathcal{F}_{r}^{t}]\right]  .
\end{align*}
since $X^{r,\mathbf{X}^{t,\bar{\psi}}}=X^{r,\mathbf{X}^{t,\bar{\psi}}%
(\cdot\wedge r)}$. But $I^{N,\psi}(s)-I^{\psi}(s)$ is $\mathcal{F}_{s}^{r}%
$-measurable for every $\psi\in\Lambda\times\Gamma$ and $\mathbf{X}%
^{t,\bar{\psi}}(\cdot\wedge r)$ is $\mathcal{F}_{r}^{t}$-measurable.

By the
independence of $\mathcal{F}_{s}^{r}$ and $\mathcal{F}_{r}^{t}$, we get
\begin{align*}
\mathbb{E}[\mathds{E}_{\{\lvert I^{N,\mathbf{X}^{t,\bar{\psi}}(\cdot\wedge
r)}(s)-I^{\mathbf{X}^{t,\bar{\psi}}(\cdot\wedge r)}(s)\rvert\geq\varepsilon
\}}\mid\mathcal{F}_{r}^{t}]  & =\mathbb{E}[\mathds{E}_{\{\lvert I^{N,\psi
}(s)-I^{\psi}(s)\rvert\geq\varepsilon\}}]_{\mid\psi=\mathbf{X}^{t,\bar{\psi}%
}(\cdot\wedge r)}\\
& =\mathbb{P}(\lvert I^{N,\psi}(s)-I^{\psi}(s)\rvert\geq\varepsilon)_{\mid
\psi=\mathbf{X}^{t,\bar{\psi}}(\cdot\wedge r)},
\end{align*}
which converges to $0$ as $N\rightarrow\infty$. Therefore, by the Lebesgue's
dominated convergence theorem,
\[
\lim_{N\rightarrow\infty}\mathbb{P}(\lvert I^{N,\mathbf{X}^{t,\bar{\psi}}%
}(s)-I^{\mathbf{X}^{t,\bar{\psi}}}(s)\rvert\geq\varepsilon)=0,\ \forall
\varepsilon>0.
\]
But $I^{N,\mathbf{X}^{t,\psi}}(s)$ converges in probability to $\int_{r}%
^{s}\sigma\bigl(  u,X^{r,\mathbf{X}^{t,\psi}}(u)\bigr)  dW(u)$ as
$N\rightarrow\infty$, so $\int_{r}^{s}\sigma\bigl(  u,X^{r,\mathbf{X}^{t,\psi
}}(u)\bigr)  dW(u)=I^{\mathbf{X}^{t,\bar{\psi}}}(s)$, $\mathbb{P}$-a.s.

We have proven that we can replace $\psi$ by $\mathbf{X}^{t,\psi}$, so we get
\[
\left\{
\begin{array}
[c]{l}%
\displaystyle X^{r,\mathbf{X}^{t,\psi}}(s)=X^{t,\psi}(r)+\int_{r}^{s}b\left(
u,X^{r,\mathbf{X}^{t,\psi}}(u)\right)  du+\int_{r}^{s}\sigma\left(
u,X^{r,\mathbf{X}^{t,\psi}}(u)\right)  dW(u)\quad\\
\hfill+K^{r,\mathbf{X}^{t,\psi}%
}(s),\ \forall s\in\lbrack r,T];\medskip\\
\displaystyle K^{r,\mathbf{X}^{t,\psi}}(s)=K^{t,\psi}(r)+\int_{r}^{s}\nu(X^{r,\mathbf{X}%
^{t,\psi}}(u))\mathds{1}_{\{X^{r,\mathbf{X}^{t,\psi}}(u)\in\partial
G\}}dA^{r,\mathbf{X}^{t,\psi}}(s),\ \forall s\in\lbrack r,T];
\end{array}
\right.
\]
By uniqueness, allowing now $s$ to vary, we can ensure that the following equalities hold for every $s\in[r,T]$, $\mathbb{P}(d\omega)$-a.s.:
\begin{align}
X^{r, \mathbf{X}^{t,\psi} ( \omega)}(s,\omega)  & =X^{t,\psi}(s,\omega),\nonumber\\
K^{r, \mathbf{X}^{t,\psi} ( \omega)}(s,\omega)  & =K^{t,\psi}(s,\omega) \, .\nonumber
\end{align}
The above equalities clearly hold for $s\in[0,r)$, so we can conclude the proof.
\end{proof}

Tower property \eqref{tower1} represents the first passage to state the Markov property. The other necessary element is represented by the definition of a proper transition semigroup that can handle paths.

\subsection{Markov property and Transition Semigroup}

The reflection term
$K^{t,\psi}$ introduces a dependence on the path of the process at the boundary, where the reflection occurs. The transition semigroup for a reflected SDE \eqref{forward2} can be extended to the coupled process
$ \mathbf{X}^{t,\psi} = (X^{t,\psi}, K^{t,\psi})$
  where the semigroup models the joint evolution of both processes.

We introduce the transition semigroup $P_{t,s}$ for the pair $(X,K)$
acting on the product space $\Lambda \times \Gamma$.
We define $P_{t,s}:\mathcal{B}_{b}\left(  \Lambda  \times  \Gamma \right)
\rightarrow \mathcal{B}_{b}\left(  \Lambda  \times  \Gamma \right) $, where $\mathcal{B}_b (\Lambda \times \Gamma)$ is the space of Borel bounded functions on $ \Lambda \times \Gamma$, by
\begin{equation}
\label{PtG}
P_{t,s}\left[  f\right]  (\psi):=\mathbb{E}\left[  f\left(  \mathbf{X}^{t,\psi
}(\cdot\wedge s)\right)  \right]
\,,\qquad f\in\mathcal{B}_{b}\left(  \Lambda
\times \Gamma \right) \, .
\end{equation}

\noindent
The transition probabilities are given by
\begin{equation}
p(t,\psi,s,B) :=P_{t,s}\left[  \mathds{1}_{B}\right]  (\psi)  =\mathbb{P}\left(  \,
\mathbf{X}^{t,\psi}(\cdot\wedge s)\in B\right)
 \, ,
\end{equation}
for $0 \leq t \leq s \leq T$, $\psi\in \Lambda \times \Gamma$ and for all Borel sets $B$ in $  \Lambda \times \Gamma $.

\begin{proposition}[Markov Property] Let $\mathbf{X}^{t,\psi}$ be a strong solution of Eq.
\eqref{forward2}. Then $\mathbf{X}^{t,\psi}$ is a Markov process on
$\Lambda \times \Gamma$, i.e.
\begin{equation}
\label{markov1}
\mathbb{P}\left(  \mathbf{X}^{t,\psi}\in B\,|\,\mathcal{F}_{s}^{t}\right)
=\mathbb{P}\left(  \mathbf{X}^{t,\psi}\in B\,|\,\mathbf{X}^{t,\psi}(\cdot\wedge
s)\right)\, \text{a.s.,}
\end{equation}
for all $0\leq t\leq s\leq T$, for all $\psi\in \Lambda \times \Gamma$ and for all Borel sets $B$ in $  \Lambda \times \Gamma $.

\noindent
Moreover, for any $(t,r,s,\psi) \in[0,T]^3\times (\Lambda \times \Gamma) $ with $t\leq r\leq s$
 and any Borel set $B$ in $\Lambda \times \Gamma$, the following holds a.s. on $\Omega$:
\begin{align}
\mathbb{P}\left(   \mathbf{X}^{t,\psi}(\cdot\wedge s)  \in B|\mathcal{F}^t_{r}\right)    & =\mathbb{P}\left(  \,  \mathbf{X}^{t,\psi}(\cdot\wedge s)  \in B \, | \, \mathbf{X}^{t,\psi}(\cdot\wedge r)\right)  \nonumber\\
& =p\left(  r,
\mathbf{X}^{t,\psi}(\cdot\wedge r)
,s,B\right).
\end{align}
\end{proposition}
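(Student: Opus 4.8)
The plan is to deduce the Markov property from the tower property (Proposition~\ref{tower_prop}) together with a \emph{freezing} (substitution) argument exploiting the independence of the Brownian increments over $[t,r]$ and over $[r,s]$. Fix $t\leq r\leq s\leq T$, an initial datum $\psi\in\Lambda\times\Gamma$, and a Borel set $B\subset\Lambda\times\Gamma$.

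First I would collect the measurability ingredients, essentially all of which already appear in the proof of Proposition~\ref{tower_prop}: the map $(\omega,\bar\psi)\mapsto\mathbf X^{r,\bar\psi}(\omega,\cdot\wedge s)$ is $\mathcal F^r_s\otimes\mathcal B(\Lambda\times\Gamma)/\mathcal B(\Lambda\times\Gamma)$-measurable (the pointwise-in-time joint measurability noted there, together with path-continuity, suffices since the Borel $\sigma$-field of $\Lambda\times\Gamma$ is generated by the evaluation maps), and it depends on $\bar\psi$ only through the stopped path $\bar\psi(\cdot\wedge r)$, because $X^{r,\bar\psi}=X^{r,\bar\psi(\cdot\wedge r)}$. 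Setting $\Phi(\omega,\bar\psi):=\mathds{1}_B\bigl(\mathbf X^{r,\bar\psi}(\omega,\cdot\wedge s)\bigr)$, the section $\Phi(\cdot,\bar\psi)$ is $\mathcal F^r_s$-measurable and $\mathbb E[\Phi(\cdot,\bar\psi)]=P_{r,s}[\mathds{1}_B](\bar\psi)=p(r,\bar\psi,s,B)$ by the very definition of the transition semigroup.

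Next I would invoke the tower property $\mathbf X^{t,\psi}=\mathbf X^{r,\mathbf X^{t,\psi}}$ which, combined with the fact that $\mathbf X^{r,\cdot}$ only sees its argument up to time $r$, gives $\mathbf X^{t,\psi}(\cdot\wedge s)=\mathbf X^{r,\mathbf X^{t,\psi}(\cdot\wedge r)}(\cdot\wedge s)$, i.e. $\mathds{1}_B(\mathbf X^{t,\psi}(\cdot\wedge s))=\Phi\bigl(\cdot,\mathbf X^{t,\psi}(\cdot\wedge r)\bigr)$ $\mathbb P$-a.s. Since $\mathbf X^{t,\psi}(\cdot\wedge r)$ is $\mathcal F^t_r$-measurable and hence independent of $\mathcal F^r_s$, the freezing lemma — valid for any bounded measurable $\Phi$ on a product of measurable spaces, via a monotone-class argument starting from product indicators $\Phi(\omega,\bar\psi)=\mathds{1}_A(\omega)\,\mathds{1}_C(\bar\psi)$ — yields $\mathbb E[\Phi(\cdot,\mathbf X^{t,\psi}(\cdot\wedge r))\mid\mathcal F^t_r]=p\bigl(r,\mathbf X^{t,\psi}(\cdot\wedge r),s,B\bigr)$. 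This is precisely the first identity in the ``moreover'' part; and because its right-hand side is $\sigma(\mathbf X^{t,\psi}(\cdot\wedge r))$-measurable while $\sigma(\mathbf X^{t,\psi}(\cdot\wedge r))\subset\mathcal F^t_r$, conditioning both sides on $\sigma(\mathbf X^{t,\psi}(\cdot\wedge r))$ gives the second identity. Finally, \eqref{markov1} is obtained by running the same computation with $r=s$ and $\cdot\wedge s$ replaced by $\cdot\wedge T$ (equivalently, applied to the whole path $\mathbf X^{t,\psi}$): the decomposition $\mathbf X^{t,\psi}=\mathbf X^{s,\mathbf X^{t,\psi}(\cdot\wedge s)}$ together with the independence of $\mathcal F^s_T$ from $\mathcal F^t_s$ gives $\mathbb E[\mathds{1}_B(\mathbf X^{t,\psi})\mid\mathcal F^t_s]=P_{s,T}[\mathds{1}_B](\mathbf X^{t,\psi}(\cdot\wedge s))$, which is $\sigma(\mathbf X^{t,\psi}(\cdot\wedge s))$-measurable, and the claim follows exactly as before.

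The step I expect to be the main obstacle is the rigorous justification of the joint $\mathcal F^r_s\otimes\mathcal B(\Lambda\times\Gamma)$-measurability of $(\omega,\bar\psi)\mapsto\mathbf X^{r,\bar\psi}$ and of the freezing lemma in this infinite-dimensional parameter setting, the ``parameter'' living in the Polish space $\Lambda\times\Gamma$ rather than in $\mathbb R^d$. The measurability can be read off from the Picard scheme defining $\mathbf X^{r,\bar\psi}$: each iterate is jointly measurable, being assembled from deterministic integrals and It\^o integrals whose integrands depend measurably on $\bar\psi$, and joint measurability passes to the uniform limit; this is the very point already addressed for the stochastic integral in the proof of Proposition~\ref{tower_prop}. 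Once this is in place, the freezing lemma holds on an arbitrary product of measurable spaces and the remainder is a routine application of the tower property of conditional expectation and the independence of disjoint Brownian increments.
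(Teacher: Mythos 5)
Your proposal is correct and follows essentially the same route as the paper: both deduce the statement from the tower property of Proposition~\ref{tower_prop} via a freezing/substitution argument based on the independence of $\mathcal{F}^r_s$ and $\mathcal{F}^t_r$ (your $\Phi(\omega,\bar\psi)$ is exactly the paper's kernel $k(\omega,\phi,\varphi)=f(\mathbf{X}^{r,(\phi,\varphi)}(\omega,\cdot\wedge s))$ with $f=\mathds{1}_B$), and both obtain \eqref{markov1} by specializing the ``moreover'' identity to terminal time $T$. Your explicit attention to the joint measurability of $(\omega,\bar\psi)\mapsto\mathbf{X}^{r,\bar\psi}$ and to the monotone-class justification of the freezing lemma is a welcome sharpening of points the paper treats more informally.
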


\begin{proof}
Let $\psi\in \Lambda \times \Gamma$ and $t\leq r\leq s$. By definition \ref{PtG}, we have
\begin{align}
P_{r,s}\left[  f\right]  (\mathbf{X}^{t,\psi}(\omega^{\prime}))&=\mathbb{E}_{\omega}\left[  f\left(  \mathbf{X}^{r,\mathbf{X}^{t,\psi
}(\omega^{\prime})}(\omega,\cdot\wedge s)\right)  \right]
=\mathbb{E}_{\omega}\left[  f\left(  \mathbf{X}^{r,\mathbf{X}^{t,\psi
}(\omega^{\prime},\cdot\wedge r)}(\omega,\cdot\wedge s)\right)  \right]\nonumber\\
& =\mathbb{E}_{\omega}\left[  k(\omega,\mathbf{X}^{t,\psi}(\omega^{\prime},\cdot\wedge r))\right],  \,
\forall \omega^{\prime}\in\Omega,\nonumber
\end{align}
where $k:\Omega\times\Lambda\times\Gamma\mapsto\mathbb{R}$ denotes the process defined by
\[
k(\omega,\phi,\varphi):=f(\mathbf{X}^{r,(\phi,\varphi)}(\omega,\cdot\wedge s)).
\]
Since $k(\cdot,\mathbf{X}^{t,\psi
}(\omega^{\prime},\cdot\wedge r))$ is
measurable with respect to the $\sigma$-algebra $\mathcal{F}_{s}^{r}$ for all $\omega^{\prime}\in\Omega$, and $\mathcal{F}_{s}^{r}$ is
independent from $\mathcal{F}_{r}^{t}$, we have
\[
\mathbb{E}_{\omega}\left[  k(\omega
,\mathbf{X}^{t,\psi
}(\omega^{\prime},\cdot\wedge r)\right]=\mathbb{E}\left[  k(\cdot,\mathbf{X}^{t,\psi
}(\cdot,\cdot\wedge r))\mid\mathcal{F}_{r}^{t}\right](\omega^{\prime}),\ \mathds{P}(d\omega^{\prime})\text{-a.s.}
\]
But $k(\cdot,\mathbf{X}^{t,\psi
}(\cdot,\cdot\wedge r))  =k(\cdot,\mathbf{X}^{t,\psi
}) $ by the tower property \eqref{tower1} for the forward process. We can now apply this to get
\[
P_{r,s}\left[  f\right]  (\mathbf{X}^{t,\psi})=\mathbb{E}\left[  f\left(
\mathbf{X}^{t,\psi}(\cdot\wedge s)\right) |\mathcal{F}%
_{r}^{t}\right] \, .
\]

If $f$ is the indicator function $\mathds{1}_{B}$, for a Borel set $B$ in $  \Lambda \times \Gamma $, we obtain
\begin{align}
p\left(  r,
\mathbf{X}^{t,\psi}(\cdot\wedge r)
,s,B\right)&=P_{r,s}\left[  \mathds{1}_{B}\right]  (\mathbf{X}^{t,\psi}(\cdot\wedge r))=P_{r,s}\left[  \mathds{1}_{B}\right]  (\mathbf{X}^{t,\psi})\nonumber\\
&=\mathbb{E}%
\left[  \mathds{1}_{B}\left(  \mathbf{X}^{t,\psi}(\cdot\wedge s)\right)  |\mathcal{F}_{r}^{t}\right]
=\mathbb{P}\left(  \mathbf{X}^{t,\psi}
(\cdot\wedge s)\in B|\mathcal{F}_{r}^{t}\right).\nonumber
\end{align}

\noindent
Conditioning the above to $\mathbf{X}^{t,\psi}(\cdot\wedge r)$, we get
\[
p\left(  r,
\mathbf{X}^{t,\psi}(\cdot\wedge r)
,s,B\right)=\mathbb{P}\left(  \mathbf{X}^{t,\psi}(\cdot\wedge s)\in
B\,|\,\mathcal{F}_{r}^{t}\right)  =\mathbb{P}\left(  \mathbf{X}^{t,\psi}(\cdot\wedge
s)\in B\,|\,\mathbf{X}^{t,\psi}(\cdot\wedge r)\right),
 \]
which concludes the second part of the statement.
For the first part (the Markov property), one can consider in particular $s=T$.
\end{proof}

\section{The Forward Backward system}
\label{sec:fbsde}

In this section, we focus on the BSDE of the FBSDE system \eqref{FBSDE}.
\begin{equation}
\left\{
\begin{array}{l}
\displaystyle Y^{t,\psi}\left( s\right) =h({X}^{t,\psi}, {K}^{t,\psi})+\int_{s}^{T}f(r,{X}^{t,\psi},{K}^{t,\psi},Y^{t,\psi}\left( r\right) ,Z^{t,\psi}\left(
r\right) ,Y_{r}^{t,\psi})dr \smallskip  \\
\qquad  \qquad \displaystyle +\int_{s}^{T}g(r,{X}^{t,\psi},{K}^{t,\psi},Y^{t,\psi
}\left( r\right) ,Y_{r}^{t,\psi})dA^{t,\psi}(r)-\int_{s}^{T}Z^{t,\psi}\left( r\right) dW\left( r\right) , \medskip  \\
\displaystyle Y^{t,\psi}\left( s\right) =Y^{s,\psi }\left( s\right) .\quad
\end{array}%
\right.   \label{bsde9}
\end{equation}%

We apply Theorem 4 from \cite{spa} that ensures there exists a unique solution in $\mathbb{S}^{2} (\mathbb{R})\times\mathbb{H}^{2}$  for the BSDE  \eqref{bsde9} by exploiting a standard Banach's fixed point argument which requires Lipschitz constants
$L$ or $\delta$ to be small enough.

For $\varphi \in \Gamma$, we define the modulus of continuity as
\[
\mathbf{\omega}_{\delta} (\varphi):=\sup_{s\in\lbrack0,T-\delta]}
\Vert
\varphi
\Vert_{s,s+\delta}  \, ,
\]
with $L_1 (\varphi)$ and $\tilde{L}_1 (\varphi)$ being the stochastic processes modelling the Lipschitz constants introduced in assumptions $\mathrm{(A}_{3}\mathrm{)}$ and $\mathrm{(A}_{4}\mathrm{)}$. Let $\beta > 2\sqrt{2}\tilde{L}$, we assume the existence of a positive constant $c<c_{\beta,\tilde{L}%
}:=\min\left\{  \frac{\beta^{2}-8\tilde{L}^{2}}{4\beta^{2}},\frac{1}%
{584}\right\}  \ $such that

\begin{description}
\item[$\mathrm{(H}_{1}\mathrm{)}$]
$\displaystyle  L_{1} (\varphi) \cdot\max\left\{  1,T\right\}
\cdot\frac{e^{(8L^{2}+\frac{1}{2})\delta+\beta\mathbf{\omega}_{\delta} (\varphi)}%
}{4L^{2}}\leq c,\quad\mathbb{P}$-a.s.;

\item[$\mathrm{(H}_{2}\mathrm{)}$] $ \displaystyle 4\tilde{L}_{1} (\varphi) \cdot \Vert \varphi \Vert_{0,T} \cdot
\frac{e^{(8{L}^{2}+\frac{1}{2})\delta+\beta\omega_\delta (\varphi)}}{\beta}\leq
c,\quad\mathbb{P}$-a.s.
\end{description}

\begin{proposition}
\label{continuous_dep}
Let us consider the coupled forward--backward system \eqref{FBSDE}
which satisfies Assumptions $(A_1)$-$(A_4)$ and $(H_1)$-$(H_2)$.
Then, the coupled forward--backward system admits a unique solution
$\left(X^{t,\psi},K^{t,\psi}, Y^{t,\psi}, Z^{t,\psi} \right)$ in $\mathbb{S}^2 (\bar{G}) \times \mathbb{S}^2 (\mathbb{R}^d)   \times \mathbb{S}^{2} (\mathbb{R})\times\mathbb{H}^{2} $. Furthermore, the map
$(t, \psi) \mapsto \left(X^{t,\psi},K^{t,\psi}, Y^{t,\psi}, Z^{t,\psi}\right)$
is continuous.
\end{proposition}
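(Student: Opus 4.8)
The plan is to establish existence and uniqueness first, and then continuity of the data map, treating the two parts separately since the forward and backward components decouple in a one-directional way. For the well-posedness, the forward system $(X^{t,\psi},K^{t,\psi})$ is already handled by Proposition~\ref{prop:forward}, so the work concentrates on the Generalized BSDE \eqref{bsde9}. I would set up a Picard-type contraction on the space $\mathbb{S}^2(\mathbb{R})\times\mathbb{H}^2$ equipped with a weighted norm $\|(Y,Z)\|_\beta^2 := \mathbb{E}\int_0^T e^{\beta A^{t,\psi}(s)}\big(|Y(s)|^2 + |Z(s)|^2\big)\,ds$, where the exponential weight is taken with respect to the bounded-variation clock $A^{t,\psi}$ rather than Lebesgue time, since the $g\,dA$ term is what forces the use of $dA$-weighting (this is the standard device for generalized BSDEs in the style of Pardoux--Zhang). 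Applying It\^o's formula to $e^{\beta A^{t,\psi}(s)}|\Delta Y(s)|^2$ for the difference of two iterates, using the Lipschitz bounds in $(A_3)(ii)$--$(iii)$ and $(A_4)(ii)$--$(iii)$ together with the exponential moment bound \eqref{estimate2} to control $L_1(K^{t,\psi})$, $\tilde L_1(K^{t,\psi})$ and the modulus $\omega_\delta(K^{t,\psi})$, the conditions $(H_1)$--$(H_2)$ are precisely what make the resulting constant strictly less than $1$, so the map is a contraction and Theorem~4 of~\cite{spa} applies. The delayed term $Y_r^{t,\psi}$ is absorbed using $(A_3)(iii)$/$(A_4)(iii)$, shifting the integral by $\delta$ and bounding it by $\|\Delta Y\|_\beta^2$ up to the factor $e^{\beta\omega_\delta}$ that appears in the hypotheses.

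For the continuity of $(t,\psi)\mapsto (X^{t,\psi},K^{t,\psi},Y^{t,\psi},Z^{t,\psi})$, I would proceed in two stages. First, continuity of the forward part in $\psi$ for fixed $t$ follows from estimate \eqref{est1} of Proposition~\ref{prop:generalized_3.1_corrected} (extended in the obvious way to also vary the $\Gamma$-component $\varphi$, which only enters through the initial value $\phi(t)-\varphi(t)$ and the initial reflection $\varphi(t)$, hence Lipschitzly), while continuity in the initial time $t$ is obtained by combining the tower property \eqref{tower1} with an estimate on $\mathbb{E}\sup_{s\le r}|\mathbf X^{t,\psi}(s)-\mathbf X^{t',\psi}(s)|^2$ for $t\le t'$, using boundedness of $b,\sigma$ and the Skorokhod-map Lipschitz property to control the increment over $[t,t']$. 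Second, for the backward part I would write the BSDE for the difference of two solutions corresponding to data $(t,\psi)$ and $(t',\psi')$, apply the same It\^o/weighted-norm computation as in the contraction step, and bound the inhomogeneous terms --- the difference of terminal values $h(\mathbf X^{t,\psi})-h(\mathbf X^{t',\psi'})$ via continuity of $h$ (assumption $(A_2)$) and the already-established forward continuity, and the differences of the generators $f,g$ evaluated along the two forward paths and two driving measures $dA^{t,\psi}$, $dA^{t',\psi'}$ via $(A_3)(i)$, $(A_4)(i)$ (continuity in the path argument) plus dominated convergence, again leaning on the uniform exponential moment bound \eqref{estimate2} for uniform integrability.

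The main obstacle will be the continuity of the backward component with respect to the \emph{driving measure} $dA^{t,\psi}$: unlike the Lebesgue-$dt$ integral, the Stieltjes integral $\int g\,dA^{t,\psi}$ depends on $\psi$ both through the integrand and through the (random) measure $dA^{t,\psi}$ itself, and these measures need not converge in total variation even when $A^{t,\psi_n}\to A^{t,\psi}$ uniformly. The way around this is to use \eqref{inequality9}, which expresses $A^{t,\psi}$ as $\ell(X^{t,\psi})$ minus absolutely continuous and stochastic-integral terms, so that convergence of $X^{t,\psi_n}$ in $\mathbb{S}^2$ yields convergence of $\int_0^\cdot \varphi_n\,dA^{t,\psi_n}$ for suitable integrands $\varphi_n$; combined with the Lipschitz control $(A_4)(ii)$ on $y$ and a Gronwall argument in the weighted norm, this closes the estimate. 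A secondary technical point is the joint measurability and well-definedness of $Y^{t,\psi}(s)$ for $s<t$ via the prescription $Y^{t,\psi}(s)=Y^{s,\psi}(s)$, which must be shown to be consistent --- this follows from the tower property for the forward system and uniqueness of the BSDE, exactly as in the proof of Proposition~\ref{tower_prop}.
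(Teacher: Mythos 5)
Your proposal is correct in outline and coincides with the paper's proof on the existence--uniqueness half: the forward part is delegated to Proposition~\ref{prop:forward}, and the backward part is obtained by treating $h(\mathbf{X}^{t,\psi})$ as an $L^2$ terminal value and running the Banach fixed-point / weighted-norm machinery of Theorem~4 in \cite{spa}, with $(H_1)$--$(H_2)$ supplying the contraction constant exactly as you describe. Where you diverge is on continuity. The paper does \emph{not} perform a difference estimate between two solutions: its entire continuity argument consists of verifying that the data of the BSDE satisfy uniform (in $\psi$) polynomial bounds, namely that the quantities $\mathbb{E}\bigl[(\int_t^T e^{\beta A^{t,\psi}}|f(s,\mathbf{X}^{t,\psi},0,0,0)|^2ds)^p\bigr]$, the analogous $g$-term against $dA^{t,\psi}$, and $\mathbb{E}\bigl[e^{p\beta A^{t,\psi}(T)}|h(\mathbf{X}^{t,\psi})|^{2p}\bigr]$ are all $\leq C\Vert\psi\Vert^{2p}$ --- proved via H\"older, the linear-growth conditions $(A_2)$, $(A_3)(iv)$, $(A_4)(iv)$, the moment estimates of Proposition~\ref{prop:generalized_3.1_corrected} and the exponential bound \eqref{estimate2} --- after which the continuity of $(t,\psi)\mapsto(Y^{t,\psi},Z^{t,\psi})$ is read off from the cited theorem. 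You instead open that black box and propose the stability estimate directly; in doing so you correctly isolate the genuine analytic difficulty, namely that $\int g\,dA^{t,\psi}$ varies with $\psi$ through both the integrand and the random driving measure, and the measures need not converge in variation; your remedy via the representation \eqref{inequality9} of $A^{t,\psi}$ is the right device. The trade-off: the paper's route is shorter and shifts the hard work into the reference, while yours is self-contained and makes visible a step the paper never surfaces; note that even on your route you still need the paper's uniform $e^{\beta A}$-weighted moment bounds (for the dominated-convergence and uniform-integrability steps you invoke), so the two arguments are complementary rather than disjoint.
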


\begin{proof}
The existence and uniqueness for the forward part is given in Prop. \ref{prop:forward}. For the BSDE, we note that $h (\mathbf{X}^{t,\psi})$ is an adapted random variable of the whole forward process with values in $L^{2}\left(  \Omega,\mathcal{F}_{T},\mathbb{P};\mathbb{R} \right)$. Thus, we can treat it as a terminal condition.

We set $h (\mathbf{X}^{t,\psi}) = \xi$ and we consider the map $\Psi:\mathbb{S}^{2} (\mathbb{R})\times\mathbb{H}^{2}\rightarrow\mathbb{S}^{2} (\mathbb{R})\times\mathbb{H}^{2}$, defined in the following way: for $\left(
U,V\right)  \in\mathbb{S}^{2} (\mathbb{R})\times\mathbb{H}^{2}$, $ \Psi\left(  U,V\right)  =\left(  Y,Z\right)  $, where the couple of
adapted processes $\left(  Y,Z\right)  $ is the solution to the equation
\begin{multline}
\label{BSDE iterative 1}
Y(t)=\xi+\int_{t}^{T}f(s,Y(s),Z(s),U_{s},V_{s})ds \\
+\int_{t}^{T}g(s,U(s),U_{s}%
)dA(s)-
\int_{t}^{T}Z(s)dW(s),\quad t\in\left[  0,T\right]  .
\end{multline}
 We proceed as in \cite{spa} where existence and uniqueness are obtained by the Banach fixed point
theorem.

Concerning the continuity of the solution map, we establish that the following expression is uniformly bounded with respect to $\psi \in \Lambda \times \Gamma$
\begin{multline}
\label{tobound}
\mathbb{E}\left[\left(\int_t^T e^{\beta A^{t,\psi}(s)} \left|f(s, \mathbf{X}^{t,\psi}, 0, 0, 0)\right|^2 ds\right)^p \right. \\
\left. + \left(\int_t^T e^{\beta A^{t,\psi}(s)} \left|g(s, \mathbf{X}^{t,\psi}, 0, 0)\right|^2 dA^{t,\psi}(s)\right)^p\right] \, .
\end{multline}

\noindent
For the first term, we apply H\"older's inequality:
\begin{multline}
\mathbb{E}\left[\left(\int_t^T e^{\beta A^{t,\psi}(s)} |f(s, \mathbf{X}^{t,\psi}, 0, 0, 0)|^2 ds\right)^p\right] \\
\leq (T-t)^{p-1}\mathbb{E}\left[\int_t^T e^{p\beta A^{t,\psi}(s)} |f(s, \mathbf{X}^{t,\psi}, 0, 0, 0)|^{2p} ds\right] \, .
\end{multline}
By $(A_3)(iv)$, we have
\begin{align}
\mathbb{E}  \Bigg[\int_t^T e^{p\beta A^{t,\psi}(s)} & |f(s, \mathbf{X}^{t,\psi}, 0, 0, 0)|^{2p} ds\Bigg] \leq M^{2p}\mathbb{E}\left[\int_t^T e^{p\beta A^{t,\psi}(s)}(1 + |\mathbf{X}^{t,\psi}|)^{2p} ds\right] \\
& \leq M^{2p}\mathbb{E}\left[\sup_{t\leq s\leq T}(1 + |\mathbf{X}^{t,\psi}(s)|)^{4p}\right]^{1/2} \cdot \mathbb{E}\left[\int_t^T e^{2p\beta A^{t,\psi}(s)}ds\right]^{1/2}\\
&\leq M^{2p}\mathbb{E}\left[\sup_{t\leq s\leq T}(1 + |\mathbf{X}^{t,\psi}(s)|)^{4p}\right]^{1/2} \cdot \sqrt{C(2p\beta)(T-t)}
\end{align}

\noindent
From the moment estimates given in Prop. \ref{prop:generalized_3.1_corrected}, we have
\begin{equation}
\mathbb{E}\left[\left(\int_t^T e^{\beta A^{t,\psi}(s)} \left|f(s, \mathbf{X}^{t,\psi}, 0, 0, 0)\right|^2 ds\right)^p\right] \leq C\Vert \psi \Vert^{2p}
\label{f_ineq}
\end{equation}

\noindent
For the second term, applying $(A_4)(iv)$
\begin{align}
\int_t^T & e^{\beta A^{t,\psi}(s)}  \left|g(s, \mathbf{X}^{t,\psi}, 0, 0)\right|^2 dA^{t,\psi}(s) \leq\\ & \leq M^{2} \left( \sup_{t\leq s\leq T} |g(s, \mathbf{X}^{t,\psi}, 0, 0) |^4 A^{t,\psi}(T) \right)^{1/2} \cdot \left(\int_t^T e^{2\beta A^{t,\psi}(t)}dA^{t,\psi}(s)\right)^{1/2}\\
&\leq \dfrac{M^{2}}{\beta}\left(\sup_{t\leq s\leq T}(1 + |\mathbf{X}^{t,\psi}(s)|)^{4} A^{t,\psi} (T)\right)^{1/2} \cdot  e^{2\beta A^{t,\psi}(T)}
\label{inequa}
\end{align}

Taking expectation and power $p$ of \eqref{inequa} and combining with \eqref{f_ineq}, we arrive at bounding Eq. \eqref{tobound} by $C \Vert \psi \Vert^{2p}$. Similarly, by $(A_2)$, we have
\begin{equation}
\label{estimate1}
\mathbb{E}\left[ e^{p\beta A^{t,\psi}(T)}\left| h(\mathbf{X}^{t,\psi}(T) ) \right|^{2p} \right] \leq C\Vert \psi \Vert^{2p} \, .
\end{equation}

Hence, the uniform bounds, with respect to $\psi$, allow us to apply Theorem 4 from \cite{spa}. The continuity of the mapping $(t,\psi) \mapsto (Y^{t,\psi}, Z^{t,\psi})$ then follows from this theorem.
\end{proof}

\subsection{The Feynman-Kac formula}

In this section, we provide a representation for the solution of the  BSDE \eqref{bsde9}.

By defining
\begin{equation}
u(t,\psi ):=Y^{t,\psi}(t),
\end{equation}
our first goal is to prove the following relation:
\begin{equation}
Y^{t,\psi}(s)=u(s,\mathbf{X}^{t,\psi}),\ \forall t,s\in \lbrack 0,T],
\end{equation}
known as the Feynman-Kac formula.

An intermediate step to prove the above formula
is related to the tower property for the process $Y^{t,\psi}$, which we directly prove via the composition procedure similar to Prop. \ref{tower_prop}.

\begin{proposition}[Tower property for $Y$ and $Z$]
Given the BSDE \eqref{bsde9}, the following relations
\begin{equation}
    \label{towerY}
Y^{t,\psi} (s) = Y^{r, \mathbf{X}^{t,\psi} } (s)
\end{equation}
and
\begin{equation}
    \label{towerZ}
Z^{t,\psi} (s) =Z^{r, \mathbf{X}^{t,\psi} } (s) \, , \qquad \text{for a.e.} \, s \in [r,T]
\end{equation}
hold with $t \leq r \leq s \leq T$.
\end{proposition}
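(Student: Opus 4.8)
The plan is to mimic the proof of the tower property for the forward process (Proposition \ref{tower_prop}), exploiting uniqueness of the solution to the BSDE \eqref{bsde9}. First I would fix $t \leq r \leq T$ and write down the BSDE solved by $(Y^{r,\mathbf{X}^{t,\psi}}, Z^{r,\mathbf{X}^{t,\psi}})$ on $[r,T]$; this is the generalized BSDE with terminal condition $h(\mathbf{X}^{r,\mathbf{X}^{t,\psi}})$, driver $f(\cdot, \mathbf{X}^{r,\mathbf{X}^{t,\psi}},\cdot,\cdot,\cdot)$, Stieltjes term $g(\cdot,\mathbf{X}^{r,\mathbf{X}^{t,\psi}},\cdot,\cdot)\,dA^{r,\mathbf{X}^{t,\psi}}$, and the delayed argument $Y_s^{r,\mathbf{X}^{t,\psi}}$. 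The key input is the forward tower property \eqref{tower1}: $\mathbf{X}^{r,\mathbf{X}^{t,\psi}} = \mathbf{X}^{t,\psi}$ a.s. on $[r,T]$, which immediately identifies all coefficient processes appearing in this BSDE with those of the original BSDE \eqref{bsde9} restricted to $[r,T]$. In particular $h(\mathbf{X}^{r,\mathbf{X}^{t,\psi}}) = h(\mathbf{X}^{t,\psi})$ and $A^{r,\mathbf{X}^{t,\psi}} = A^{t,\psi}$ on $[r,T]$.

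Next I would observe that $(Y^{t,\psi}, Z^{t,\psi})$ restricted to $[r,T]$ also solves a BSDE on $[r,T]$: reading \eqref{bsde9} for $s \in [r,T]$, it satisfies exactly the same terminal condition $h(\mathbf{X}^{t,\psi})$ at time $T$, the same driver, the same Stieltjes integrator, and the same delayed path term $Y_s^{t,\psi}$. Since the delayed path $v_s = v((s+\theta)^+)_{\theta\in[-\delta,0]}$ at times $s \geq r$ only sees values of $v$ on $[(r-\delta)^+, s] \subset [0,T]$, and since by \eqref{towerY} (which is what we are proving, but only the statement for earlier times is needed as consistency) $Y^{r,\mathbf{X}^{t,\psi}}$ and $Y^{t,\psi}$ must agree — here one must be careful: the delayed argument reaches back before $r$. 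The cleanest route is to note that the delay in the generator only enters through $Y_s$ for $s \in [r,T]$, and on $[(r-\delta)^+, r)$ the third line of \eqref{bsde9} fixes $Y^{t,\psi}(u) = Y^{u,\psi}(u)$, which is a fixed deterministic-in-$\omega$ quantity (an $\mathcal{F}_u^t$-measurable, indeed deterministic functional value), so the delayed path of $Y^{t,\psi}$ on $[r,T]$ is a well-defined input depending only on $(Y^{t,\psi}(u))_{u \in [r,T]}$ together with this fixed history; the same holds for $Y^{r,\mathbf{X}^{t,\psi}}$ with an identical history by construction of the FBSDE. Hence both pairs solve the \emph{same} generalized time-delayed BSDE on $[r,T]$ with the same data.

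Then I would invoke the uniqueness part of Proposition \ref{continuous_dep} (equivalently Theorem~4 of \cite{spa}), applied on the time interval $[r,T]$ under assumptions $(A_1)$–$(A_4)$ and $(H_1)$–$(H_2)$, to conclude $Y^{t,\psi}(s) = Y^{r,\mathbf{X}^{t,\psi}}(s)$ for all $s \in [r,T]$ a.s., and $Z^{t,\psi}(s) = Z^{r,\mathbf{X}^{t,\psi}}(s)$ for a.e.\ $s \in [r,T]$, which are precisely \eqref{towerY} and \eqref{towerZ}. A small technical point to address before applying uniqueness is that the coefficient processes of the two BSDEs must coincide not merely in law but almost surely as processes; this is exactly what the forward tower property \eqref{tower1} and the identification $A^{r,\mathbf{X}^{t,\psi}} = A^{t,\psi}$, $K^{r,\mathbf{X}^{t,\psi}} = K^{t,\psi}$ on $[r,T]$ deliver, together with the pathwise definition of $h$, $f$, $g$ as functionals of $\mathbf{X}$.

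The main obstacle is the bookkeeping around the time-delayed term: one must verify that the delayed path $Y_s^{r,\mathbf{X}^{t,\psi}}$ and $Y_s^{t,\psi}$, for $s \in [r,T]$, are genuinely the same functional of the unknown on $[r,T]$ once the common (already-determined) history on $[(r-\delta)^+, r)$ is plugged in. This requires unwinding the definition \eqref{delayed_term} and using the consistency prescription $Y^{t,\psi}(u) = Y^{u,\psi}(u)$ on $[0,t)$ together with the analogous prescription built into \eqref{FBSDE}; once this is granted, the uniqueness argument is immediate and the rest is routine.
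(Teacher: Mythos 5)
Your overall strategy --- identify the equation satisfied by $(Y^{r,\mathbf{X}^{t,\psi}},Z^{r,\mathbf{X}^{t,\psi}})$ on $[r,T]$ via the forward tower property \eqref{tower1}, then conclude by uniqueness --- is exactly the paper's strategy (its Step~3 is the appeal to uniqueness from Proposition~\ref{continuous_dep}). However, there is a genuine gap at the very first step, which you dispose of in one clause: ``write down the BSDE solved by $(Y^{r,\mathbf{X}^{t,\psi}}, Z^{r,\mathbf{X}^{t,\psi}})$ on $[r,T]$.'' The family $(Y^{r,\bar\psi},Z^{r,\bar\psi})$ solves the BSDE for each \emph{deterministic} $\bar\psi\in\Lambda\times\Gamma$; the object $Y^{r,\mathbf{X}^{t,\psi}}$ is this solution map composed with the $\mathcal{F}^t_r$-measurable random path $\mathbf{X}^{t,\psi}(\cdot\wedge r)$, and it is not automatic that this composition satisfies the substituted equation, because the stochastic integral $\int_s^T Z^{r,\bar\psi}(u)\,dW(u)$ is not defined pathwise in $\bar\psi$. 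This is precisely the content of the paper's Steps~1--2: freeze the initial path in a second variable $\omega$, approximate the stochastic integral by Riemann sums, and use the independence of $\mathcal{F}^r_s$ from $\mathcal{F}^t_r$ to pass the substitution $\bar\psi\mapsto\mathbf{X}^{t,\psi}(\cdot\wedge r)$ through the limit in probability (with the extra wrinkle, noted in the paper, that $Z$ is merely in $\mathbb{H}^2$ rather than continuous). Your ``small technical point'' about the coefficients coinciding almost surely addresses a different and easier issue (that one \emph{is} settled by \eqref{tower1} and the functional form of $h$, $f$, $g$); the real work is showing that the composed pair is a solution at all.

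On the other hand, the care you devote to the delayed argument $Y^{t,\psi}_s$ reaching back before $r$ is a reasonable point that the paper's proof does not discuss explicitly; your resolution (the history on $[(r-\delta)^+,r)$ is pinned down by the consistency prescription in \eqref{FBSDE} and is common to both equations, so both pairs solve the same time-delayed BSDE on $[r,T]$ with identical, already-determined history) is correct and is implicitly what makes the uniqueness argument on $[r,T]$ legitimate. So: supply the freezing/approximation argument for the stochastic integral (mirroring Proposition~\ref{tower_prop}) and the proof is complete and matches the paper's.
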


\begin{proof}
We consider the BSDE starting at time $r \in (t,T]$, where the initial process $\psi$ is replaced by $(X^{r,\psi},  K^{r,\psi})$
\begin{equation}
\left\{
\begin{array}{l}
\displaystyle Y^{r,\mathbf{X}^{t,\psi} }\left( s\right) =h(X^{r,\mathbf{X}^{t,\psi}})+\int_{s}^{T}f(u,X^{r,\mathbf{X}^{t,\psi}},Y^{r,\mathbf{X}^{t,\psi}}\left( u\right) ,Z^{r,\mathbf{X}^{t,\psi}}\left(
u\right) ,Y_{u}^{r,\mathbf{X}^{t,\psi}})du \smallskip  \\
\quad  \displaystyle +\int_{s}^{T}g(u,X^{r,\mathbf{X}^{t,\psi}},Y^{r,\mathbf{X}^{t,\psi}}\left( u\right) ,Y_{u}^{r,\mathbf{X}^{t,\psi}})dA^{r,\mathbf{X}^{t,\psi}}(u)-\int_{s}^{T}Z^{r,\mathbf{X}^{t,\psi}}\left( u\right) dW\left( u\right) , \medskip  \\
\displaystyle Y^{r,\mathbf{X}^{t,\psi}}\left( s\right) =Y^{s,\mathbf{X}^{t,\psi}}\left( s\right) ,\quad
\end{array}%
\right.   \label{bsde6}
\end{equation}%

It is possible to prove \eqref{towerY} by following the same scheme we use to prove it for $X$ in Prop. \ref{tower_prop}.

{\em Step 1.} We write Eq. (5.2) with $\omega$ and $\omega^\prime$

\begin{equation}
\left\{
\begin{array}{l}
\displaystyle Y^{r,X^{t,\psi} (\omega)}\left( s , \omega^\prime\right) =h(X^{r, \mathbf{X}^{t,\psi} ( \omega)}) \smallskip  \\
\qquad  \qquad \displaystyle
 +\int_{s}^{T}f(u,X^{r, \mathbf{X}^{t,\psi} ( \omega)},Y^{r, \mathbf{X}^{t,\psi} ( \omega)}\left( u, \omega^\prime\right) ,Z^{r, \mathbf{X}^{t,\psi} ( \omega)}\left(
u, \omega^\prime\right) ,Y_{u}^{r, \mathbf{X}^{t,\psi} ( \omega)})du \smallskip  \\
\qquad  \qquad \displaystyle +\int_{s}^{T}g(u,X^{r, \mathbf{X}^{t,\psi} ( \omega)},Y^{r, \mathbf{X}^{t,\psi} ( \omega)}\left( u, \omega^\prime\right) ,Y_{u}^{r, \mathbf{X}^{t,\psi} ( \omega)})dA^{r, \mathbf{X}^{t,\psi} ( \omega)}(u) \smallskip  \\
\qquad  \qquad \displaystyle  -\int_{s}^{T}Z^{r, \mathbf{X}^{t,\psi} ( \omega)}\left( u, \omega^\prime\right) dW\left( u, \omega^\prime\right) , \medskip  \\
\displaystyle Y^{r,\mathbf{X}^{t,\psi}}\left( s\right) =Y^{s,X^{t,\psi}}\left( s\right) ,\quad
\end{array}%
\right.   \label{bsde7}
\end{equation}%
with $t \leq r \leq s \leq T$.

{\em Step 2.} The stochastic integral is analogous to the one in the forward case, the only difference being that
$Z$ is not continuous. Despite this additional issue, we still have convergence in probability and Eq. \eqref{bsde7} can be approximated following the same procedure as in Prop. \ref{tower_prop}.

{\em Step 3.} From Prop. \ref{continuous_dep} we know the solutions of BSDEs \eqref{bsde9} and \eqref{bsde7} are unique, hence they coincide.
\end{proof}

We are now in the position to state the main result of this section given by the following theorem.

\begin{theorem}[Feynman-Kac formula]
\label{th5.1}
Under hypotheses ($A_1$) - ($A_{4}$) and conditions $\mathrm{(H}_{1}\mathrm{)}$, $\mathrm{(H}_{2}\mathrm{)}$, let $(\mathbf{X}^{t,\psi}, Y^{t,\psi}, Z^{t,\psi})_{(t, \psi)\in[0,T]\times\Lambda\times \Gamma}$ solve the forward-backward system
\eqref{FBSDE}.

Let $u: [0,T] \times \Lambda \times \Gamma \rightarrow \mathbb{R}$ be the (deterministic) function defined by
\begin{equation}
  \label{FK}
 u (t, \psi) = Y^{t,\psi} (t), \quad (t, \psi) \in [0,T] \times \Lambda\times\Gamma \, .
\end{equation}
Then, $u$ is a non-anticipative continuous function and the following formula holds:
\begin{equation}
\label{kac}
Y^{t,\psi} (s)= u (s, \mathbf{X}^{t,\psi}), \ \forall s\in [0,T] ,
\end{equation}
for any $(t,\psi) \in [0,T] \times \Lambda \times \Gamma$.
\end{theorem}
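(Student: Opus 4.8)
The plan is to combine the tower property for $(Y,Z)$ established above with the deterministic nature of the solution map from Proposition \ref{continuous_dep}, and then to derive continuity of $u$ from the continuity of $(t,\psi)\mapsto(Y^{t,\psi},Z^{t,\psi})$.

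First I would prove the Feynman--Kac identity \eqref{kac}. Fix $(t,\psi)$ and $s\in[t,T]$. By the tower property \eqref{towerY}, $Y^{t,\psi}(s)=Y^{s,\mathbf{X}^{t,\psi}}(s)$ almost surely. The crucial point is that $\mathbf{X}^{t,\psi}$ is an $\mathcal{F}^t_s$-measurable random path in $\Lambda\times\Gamma$ (more precisely $\mathbf{X}^{t,\psi}(\cdot\wedge s)$ is, and $Y^{s,\cdot}$ only sees the path up to time $s$), while the BSDE started at time $s$ is driven by the increments of $W$ after $s$, hence is independent of $\mathcal{F}^t_s$. Therefore, freezing the path, the random variable $Y^{s,\varphi'}(s)$ evaluated at the (deterministic) point $\varphi'=\mathbf{X}^{t,\psi}(\omega',\cdot\wedge s)$ equals, $\mathbb{P}(d\omega')$-a.s., the deterministic function $u(s,\cdot)$ evaluated there: this is exactly the argument used in the proof of the Markov property, replacing $P_{r,s}[f]$ by the BSDE solution map. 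Since by Proposition \ref{continuous_dep} the map $\varphi'\mapsto Y^{s,\varphi'}(s)=u(s,\varphi')$ is (deterministic and) continuous, hence Borel, the composition $u(s,\mathbf{X}^{t,\psi})$ is well-defined and equals $Y^{s,\mathbf{X}^{t,\psi}}(s)=Y^{t,\psi}(s)$ a.s. For $s\in[0,t)$ the identity is immediate from the convention $Y^{t,\psi}(s)=Y^{s,\psi}(s)=u(s,\psi)=u(s,\mathbf{X}^{t,\psi})$, since $\mathbf{X}^{t,\psi}(\cdot\wedge s)=\psi(\cdot\wedge s)$ there.

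Next I would record that $u$ is non-anticipative, i.e. $u(t,\psi)=u(t,\psi(\cdot\wedge t))$: this follows because the forward system \eqref{forward2} started at $t$ only uses $\phi(t),\varphi(t)$ and the dynamics on $[t,T]$, so $\mathbf{X}^{t,\psi}=\mathbf{X}^{t,\psi(\cdot\wedge t)}$, and the terminal data $h(\mathbf{X}^{t,\psi})$ and generators inherit this; hence $Y^{t,\psi}(t)=Y^{t,\psi(\cdot\wedge t)}(t)$. Continuity of $u$ on $[0,T]\times\Lambda\times\Gamma$ is a direct consequence of the continuity of $(t,\psi)\mapsto(Y^{t,\psi},Z^{t,\psi})$ in $\mathbb{S}^2\times\mathbb{H}^2$ asserted in Proposition \ref{continuous_dep}: if $(t_n,\psi_n)\to(t,\psi)$ then $\mathbb{E}[\sup_s|Y^{t_n,\psi_n}(s)-Y^{t,\psi}(s)|^2]\to 0$, and one extracts convergence of $Y^{t_n,\psi_n}(t_n)\to Y^{t,\psi}(t)$ using also the (pathwise) continuity in $s$ of the limit process together with a standard interpolation estimate $|Y^{t_n,\psi_n}(t_n)-Y^{t,\psi}(t)|\le |Y^{t_n,\psi_n}(t_n)-Y^{t,\psi}(t_n)|+|Y^{t,\psi}(t_n)-Y^{t,\psi}(t)|$.

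The main obstacle is the measurability/independence bookkeeping in the first step: making rigorous that the random path $\mathbf{X}^{t,\psi}(\cdot\wedge s)$ can legitimately be "plugged in" to the BSDE solution map, i.e. that $\omega\mapsto Y^{s,\mathbf{X}^{t,\psi}(\omega)}(\omega)$ coincides a.s. with $u(s,\cdot)$ composed with that random path. This requires knowing that the solution map $\varphi'\mapsto(Y^{s,\varphi'},Z^{s,\varphi'})$ admits a version that is jointly measurable in $(\omega,\varphi')$ and that one can disintegrate over the independent $\sigma$-algebras $\mathcal{F}^t_s$ and $\mathcal{F}^s_T$ exactly as in the proof of Proposition \ref{tower_prop} and the Markov property; the non-continuity of $Z$ and the delayed terms $Y^{t,\psi}_r$ add bookkeeping but do not change the structure of the argument, since convergence in probability of the Euler--type approximations of the stochastic integral, already used above, is all that is needed.
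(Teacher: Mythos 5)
Your proposal is correct and follows essentially the same route as the paper: the identity \eqref{kac} is obtained by setting $r=s$ in the tower property \eqref{towerY} and identifying $Y^{s,\mathbf{X}^{t,\psi}}(s)$ with $u(s,\mathbf{X}^{t,\psi})$ via the freezing/independence argument, while continuity follows from the continuous dependence of the solution map (the paper cites Theorem 7 of \cite{spa} directly, which is what Proposition \ref{continuous_dep} rests on). The only difference is that you spell out the measurability and substitution bookkeeping that the paper leaves implicit, which is a welcome elaboration rather than a different approach.
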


\begin{proof}
Feynman-Kac formula \eqref{kac} is a direct consequence of tower property. By plugging $r=s$ in Eq. \eqref{towerY} we obtain
\begin{equation}
    \label{FK1}
Y^{t,\psi} (s) = Y^{s, \mathbf{X}^{t,\psi}  } (s) = u (s, \mathbf{X}^{t,\psi}) \, .
\end{equation}

The continuity comes from Theorem 7 in \cite{spa}.
\end{proof}

\section{The generalized directional gradient}
\label{sec:gdg}

Having established the Feynman-Kac representation for $Y^{t,\psi}$ in the previous section, we now turn to the characterization of the process $Z^{t,\psi}$.

We would like to express also the process $Z$ in terms of $X^{t,\psi}$ using a deterministic function, in the same spirit as the Feynman-Kac formula for $Y^{t,\psi}$. Intuitively, the idea concerns constructing a relation between $Z^{t,\psi}$ and a derivative with respect to the spatial variable of the function $u$ in Eq. \eqref{kac}.
In the current setting, $u$ may not be differentiable, but suitable Lipschitz assumptions on the coefficients $h$, $f$ and $g$ help to build a proper notion of gradient suitable for our setting. The generalized directional gradient introduced below provides exactly this framework for non-differentiable functionals.

\begin{definition}
\label{def:gdg}
Let $v: [0,T] \times \Lambda \times \Gamma \rightarrow \mathbb{R} $ be a continuous function. The generalized directional gradient $\nabla^{\sigma} v$ is defined as the set of all non-anticipative functions $\zeta:[0,T]\times\Lambda \times \Gamma \rightarrow\mathbb{R}^d$ such that the processes $v(\cdot,\mathbf{X}^{t,\psi})$ and $W(\cdot)$ admit a joint quadratic variation on the interval $[t, \tau]$ and the following relation
\begin{equation}
\label{gdg}
\langle v(\cdot,\mathbf{X}^{t,\psi}),W(\cdot)\rangle_{[t,\tau]}
= \displaystyle \int_t^\tau \zeta(s,\mathbf{X}^{t,\psi})ds,
\end{equation}
holds true for all $(t,\phi)\in[0,T]\times\Lambda$ and all $\tau\in[t,T]$.
\end{definition}

From \cite{fuhrman_tessitore}, we recall the definition of the {\it joint quadratic variation}  $\langle X,Y\rangle_{[t,\tau]}$  of a pair of real stochastic processes $(X(t),Y(t))_{t\in[0,T]}$ on the interval $[t,\tau]$,
\begin{equation}
\label{jqv}
\langle X,Y\rangle_{[t,\tau]}:=\lim_{\epsilon\searrow 0}\displaystyle \frac{1}{\epsilon}
\int_t^\tau (X(s+\epsilon)-X(s))\cdot(Y(s+\epsilon)-Y(s))ds,
\end{equation}
where the limit is taken in the sense of convergence in probability.

\medskip

\begin{proposition}
\label{Zprop}
If Assumptions $(A_1)-(A_4)$ and $(H_1)-(H_2)$ hold true,
the set of generalized gradients $\nabla^\sigma  u$
is not empty and for every  $\zeta \in \nabla^\sigma u$,
the following:
\begin{equation}
\label{towerZ3}
Z^{t,\psi} (s) = \zeta \left(s, \mathbf{X}^{t,\psi}\right) \, ,
\end{equation}
holds $\mathbb{P}\textit{-a.s.}$  and for
 a.e. $s \in [t,T]$.
\end{proposition}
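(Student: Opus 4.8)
The plan is to first establish the existence of a candidate $\zeta$ in the generalized directional gradient by exploiting the Feynman-Kac formula \eqref{kac} together with the martingale representation underlying the BSDE \eqref{bsde9}, and then to identify that candidate with $Z^{t,\psi}$ via the joint quadratic variation in Definition \ref{def:gdg}. First I would fix $(t,\psi)\in[0,T]\times\Lambda\times\Gamma$ and $\tau\in[t,T]$ and apply the Feynman-Kac representation \eqref{kac}, which gives $Y^{t,\psi}(s)=u(s,\mathbf{X}^{t,\psi})$ for all $s\in[t,T]$. Rewriting the BSDE \eqref{bsde9} between $s$ and $\tau$, the process $u(\cdot,\mathbf{X}^{t,\psi})$ is a continuous semimartingale whose martingale part is exactly $\int_t^\cdot Z^{t,\psi}(r)\,dW(r)$, since the finite-variation part (the $dr$ and $dA^{t,\psi}$ integrals against $f$ and $g$) contributes nothing to a quadratic covariation with Brownian motion. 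The plan is therefore to compute $\langle u(\cdot,\mathbf{X}^{t,\psi}),W(\cdot)\rangle_{[t,\tau]}$ directly from the definition \eqref{jqv}: substituting $u(s+\epsilon,\mathbf{X}^{t,\psi})-u(s,\mathbf{X}^{t,\psi})=\int_s^{s+\epsilon}Z^{t,\psi}(r)\,dW(r)+(\text{bounded variation terms})$, the finite-variation increments times $(W(s+\epsilon)-W(s))$ vanish in probability after dividing by $\epsilon$ and integrating, while the martingale contribution yields $\frac{1}{\epsilon}\int_t^\tau\int_s^{s+\epsilon}Z^{t,\psi}(r)\,dr\,ds\to\int_t^\tau Z^{t,\psi}(s)\,ds$ by Lebesgue differentiation, using $Z^{t,\psi}\in\mathbb H^2$.

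The remaining issue is that Definition \ref{def:gdg} demands $\zeta$ to be a \emph{deterministic} non-anticipative functional on $[0,T]\times\Lambda\times\Gamma$, not merely an adapted process; so the second step is to produce such a functional and show its composition with $\mathbf{X}^{t,\psi}$ coincides with $Z^{t,\psi}$. Here I would invoke the tower property \eqref{towerZ} for $Z$, exactly as \eqref{kac} was derived from \eqref{towerY}: since $Z^{t,\psi}(s)=Z^{r,\mathbf{X}^{t,\psi}}(s)$ for $t\le r\le s$, putting $r=s$ (after a Lebesgue-point / approximation argument, because $Z$ is only defined up to a $ds$-null set) gives $Z^{t,\psi}(s)=Z^{s,\mathbf{X}^{t,\psi}}(s)$ for a.e. $s$, so the functional $\zeta(s,\psi):=Z^{s,\psi}(s)$ is the natural candidate; its non-anticipativity and measurability follow from the continuity of $(t,\psi)\mapsto Z^{t,\psi}$ established in Proposition \ref{continuous_dep}. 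Combining this with the quadratic-covariation computation of the first step shows $\zeta\in\nabla^\sigma u$, so the set is nonempty, and \eqref{towerZ3} holds for this $\zeta$.

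Finally, for an \emph{arbitrary} $\zeta\in\nabla^\sigma u$ one must argue uniqueness: if $\zeta_1,\zeta_2$ both satisfy \eqref{gdg}, then $\int_t^\tau(\zeta_1-\zeta_2)(s,\mathbf{X}^{t,\psi})\,ds=0$ for all $\tau\in[t,T]$ and all $(t,\psi)$, whence $\zeta_1(s,\mathbf{X}^{t,\psi})=\zeta_2(s,\mathbf{X}^{t,\psi})$ for a.e. $s$, $\mathbb P$-a.s.; since the candidate from the previous step equals $Z^{t,\psi}$, every element of $\nabla^\sigma u$ satisfies \eqref{towerZ3}. I expect the main obstacle to be the careful handling of the $r\to s$ limit in the tower property for $Z$ (Step 2), since $Z$ lacks pathwise continuity: one needs to pass from the a.e.-in-$s$ identity $Z^{t,\psi}(s)=Z^{r,\mathbf{X}^{t,\psi}}(s)$ to a genuine deterministic functional, which requires either a Lebesgue-differentiation argument in $r$ together with the continuity from Proposition \ref{continuous_dep}, or an approximation of $Z$ by the finite-variation-free part of the semimartingale $u(\cdot,\mathbf{X}^{t,\psi})$; the quadratic-variation estimates themselves are routine once the semimartingale decomposition is in hand.
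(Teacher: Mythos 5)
Your overall route is the same as the paper's: identify $Z^{t,\psi}(s)$ with a deterministic functional evaluated along $\mathbf{X}^{t,\psi}$ by combining the tower property \eqref{towerZ} with a Lebesgue differentiation argument, and then read off \eqref{gdg} from the semimartingale decomposition of $u(\cdot,\mathbf{X}^{t,\psi})=Y^{t,\psi}$. Your first step (the joint quadratic variation computation showing the finite-variation parts drop out) and your last step (that any two elements of $\nabla^\sigma u$ composed with $\mathbf{X}^{t,\psi}$ agree $ds$-a.e., so \eqref{towerZ3} holds for \emph{every} $\zeta\in\nabla^\sigma u$) are both correct and are in fact spelled out more fully than in the paper, which compresses them into a single sentence.

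The one place where you diverge, and where your proposal as written is not yet a proof, is the construction of the deterministic functional itself. Defining $\zeta(s,\psi):=Z^{s,\psi}(s)$ is not meaningful: $Z^{s,\psi}$ is an element of $\mathbb{H}^2$, i.e.\ an equivalence class modulo $ds\otimes\mathbb{P}$-null sets, so its evaluation at the single time $s$ is undefined, and the continuity of $(t,\psi)\mapsto Z^{t,\psi}$ from Proposition \ref{continuous_dep} is continuity into $\mathbb{H}^2$ and does not repair this. You correctly flag this as the main obstacle and name the right tool, but the proposition's content (non-emptiness of $\nabla^\sigma u$) hinges on actually resolving it. The paper's resolution, borrowed from \cite{fuhrman_tessitore}, is to set
\begin{equation*}
\zeta^N(t,\psi)\;=\;\liminf_{n\rightarrow\infty}\ \mathbb{E}\left[T_N\left(n\int_t^{t+\frac{1}{n}}Z^{t,\psi}(s)\,ds\right)\right],
\end{equation*}
with $T_N$ a truncation: the time average makes the quantity insensitive to the choice of version of $Z^{t,\psi}$, the expectation makes it genuinely deterministic and defined for \emph{every} $(t,\psi)$, the tower property plus independence of increments turns $\zeta^N(s,\mathbf{X}^{t,\psi})$ into a conditional expectation given $\mathcal{F}_s$, and Lebesgue differentiation plus adaptedness collapse it to $T_N\bigl(Z^{t,\psi}(s)\bigr)$ for a.e.\ $s$; letting $N\rightarrow\infty$ gives the functional. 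You should adopt this (or an equivalent) explicit construction in place of the diagonal evaluation; the rest of your argument then goes through as planned.
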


\begin{proof}
The first equality in \eqref{towerZ3} is easily proved by plugging $s=r$ in \eqref{towerZ}.

Concerning the computation of the deterministic function $\zeta$ we proceed as Theorem 3.1 in \cite{fuhrman_tessitore}. First, we introduce a truncation operator $T_N (r) = (r \wedge N) \vee (-N)$, for $r \in \mathbb{R}$,  $N > 0$, and we define
\begin{equation}
\label{eq:xi}
\displaystyle
\zeta^N (t, \psi) = \liminf_{n \rightarrow \infty} \mathbb{E}  \left[ T_N  \left( n \int_t^{t + \frac{1}{n}} Z^{t,\psi} (s) ds \right)
\right] \, .
\end{equation}

From the independence of $\sigma$-algebras, we can pass to the conditional expectation with respect to $\mathcal{F}_s$:
\begin{multline}
\zeta^N (s,X^{t,\psi}) = \liminf_{n \rightarrow \infty} \mathbb{E}^{\mathcal{F}_s}  \left[ T_N  \left( n \int_s^{s + \frac{1}{n}} Z^{s,X^{t,\psi}} (r) dr \right)
\right] \\
= \liminf_{n \rightarrow \infty} \mathbb{E}^{\mathcal{F}_s}  \left[ T_N  \left( n \int_s^{s + \frac{1}{n}} Z^{t,\psi} (r) dr \right)
\right] \, ,
\end{multline}
where we use the tower property \eqref{towerZ} in the second equality.

We fix $t$ and $\phi$. We note that $\mathbb{P}$-a.s. by the Lebesgue differentiation theorem, the following
$$
\lim_{n \rightarrow \infty}   \left[ T_N  \left( n \int_s^{s + \frac{1}{n}} Z^{t,\psi} (r) dr \right)
\right] = T_N \left( Z^{t,\psi} (s) \right) \, ,
$$
holds $\mathbb{P}$-a.s., for a.e. $s \in [t,T] $.

By the dominated convergence theorem,
$$\zeta^N (s,X^{t,\psi}) = \mathbb{E}^{\mathcal{F}_s} T_N \left( Z^{t,\psi} (s) \right) = T_N \left( Z^{t,\psi} (s) \right) \, , $$
\noindent
since $Z^{t,\psi}(s)$ is adapted to $\mathcal{F}_s$.

We can pass to the limit $N \rightarrow \infty$:
$$\lim_{N \rightarrow \infty }\zeta^N (s,\mathbf{X}^{t,\psi}) = Z^{t,\psi} (s) \, ,  $$
a.s., for a.e. $s \in [t,T]$. We define $$
\zeta (s, \phi) = \lim_{N \rightarrow \infty }\zeta^N (s,\phi) \, ,
$$
for every $s, \phi$ such that this limit exists, and $0$ otherwise.

So we have
$$
\zeta (s, \mathbf{X}^{t,\psi}) = Z^{t,\psi} (s) \, ,
$$
a.s., for a.e. $s \in [t,T]$.

From the representation formula \eqref{FK}, we can obtain \eqref{gdg}.
\end{proof}

\section{Mild solution and connections to the Kolmogorov Equation}
\label{sec:mild}

Having characterized both $Y^{t,\psi}$ and $Z^{t,\psi}$ in terms of deterministic functionals of the forward process $X^{t,\psi}$ in the preceding sections, we now turn to the PDE formulation of the problem. In this final section, we give a definition of the mild solution for the PDE \eqref{PDKE}. In particular, we integrate the contribution of the boundary condition into the definition of the mild solution. This extra term is associated with the Stieltjes integral term appearing in the backward dynamic of the system defined in Eq. \eqref{FBSDE}.

\begin{definition}
\label{mild_def}
A continuous function $u:[0,T] \times \Lambda \times \Gamma \rightarrow \mathbb{R}$ is a mild solution to Eq. \eqref{PDKE} if the generalized gradient $\nabla^\sigma u$ is non-empty and the following equality holds true for every $\zeta \in \nabla^\sigma u$, all $t \in [0,T]$ and $\psi \in \Lambda \times \Gamma$:
\begin{equation}
\label{mild}
\displaystyle u(t,\psi) = P_{t,T} \left[ h \right] (\psi)  + \int_t^T P_{t,s} \left[ 	f \left( s, \cdot, u(s, \cdot), \zeta  (s, \cdot), (u(\cdot, \cdot))_s)
\right) \right] (\psi) ds  +
 P_{t,s} \left[ \mathcal{G}
  \right] (\psi),
\end{equation}
where
the operator $\mathcal{G}:\Lambda\times\Gamma\rightarrow\mathbb{R}$ is defined as
\begin{equation}
\label{G_operator}
\mathcal{G}(\psi) = \mathcal{G} (\phi, \varphi) = \int_t^T g [s, \psi, u (s, \psi), (u (\cdot, \psi))_s ] \nu (\phi (s)) d \varphi(s).
\end{equation}
\end{definition}

We recall that the delayed term $(u(\cdot, \psi))_s$ was defined in \eqref{delayed_term}, while the {\it Markov transition semigroup} generated by the operator $\mathcal{L}$ was introduced in Eq. \eqref{PtG}.

We are now in the position to state the existence and uniqueness of a mild solution for Eq. \eqref{PDKE}.

\begin{theorem}[Existence and Uniqueness]
\label{Kolmind}
Let Assumptions $(A_1)-(A_4)$ and $(H_1)-(H_2)$  hold true. Then the function $u$ defined by Eq. \eqref{FK} is the unique mild solution \eqref{mild} to the path-dependent
partial integro-differential Eq. \eqref{PDKE}. Moreover, $u$ is connected to the FBSDE system \eqref{FBSDE} by the following equalities:

\begin{itemize}
    \item $u(s, \mathbf{X}^{t,\psi}) = Y^{t,\psi} (s)$ for every $s \in [0,T]$ and for $\psi \in \Lambda\times\Gamma$; \medskip
    \item $\zeta (s, \mathbf{X}^{t,\psi}) = Z^{t,\psi} (s)$,  for any $\zeta \in \nabla^\sigma u$, $\mathbb{P}$-a.s., for a.e. $s \in [t, T]$. \medskip
\end{itemize}
\end{theorem}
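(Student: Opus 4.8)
The plan is to derive the mild formulation \eqref{mild} directly from the Feynman--Kac representation of Theorem \ref{th5.1} together with the characterization of $Z^{t,\psi}$ from Proposition \ref{Zprop}, and to obtain uniqueness from the uniqueness of the FBSDE solution established in Proposition \ref{continuous_dep}. First I would fix $(t,\psi)\in[0,T]\times\Lambda\times\Gamma$ and take expectations in the backward equation of \eqref{FBSDE} evaluated at $s=t$: since $Z^{t,\psi}\in\mathbb H^2$ the stochastic integral has zero expectation, so
\begin{equation*}
u(t,\psi)=Y^{t,\psi}(t)=\mathbb E\!\left[h(\mathbf X^{t,\psi})+\int_t^T f\bigl(r,\mathbf X^{t,\psi},Y^{t,\psi}(r),Z^{t,\psi}(r),Y_r^{t,\psi}\bigr)dr+\int_t^T g\bigl(r,\mathbf X^{t,\psi},Y^{t,\psi}(r),Y_r^{t,\psi}\bigr)dA^{t,\psi}(r)\right].
\end{equation*}
Then I substitute the representations $Y^{t,\psi}(r)=u(r,\mathbf X^{t,\psi})$ (Theorem \ref{th5.1}), $Z^{t,\psi}(r)=\zeta(r,\mathbf X^{t,\psi})$ (Proposition \ref{Zprop}) and, for the delayed term, $Y_r^{t,\psi}=(u(\cdot,\mathbf X^{t,\psi}))_r=(u(\cdot,\mathbf X^{t,\psi}(\cdot\wedge r)))_r$, which is the pathwise delayed path of $r\mapsto u(r,\mathbf X^{t,\psi})$. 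This turns the integrands into deterministic functionals evaluated along the forward path.

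The second step is to recognize each resulting expectation as an application of the transition semigroup $P_{t,s}$ from \eqref{PtG}. For the terminal term this is immediate: $\mathbb E[h(\mathbf X^{t,\psi})]=P_{t,T}[h](\psi)$. For the Lebesgue integral, I would use Fubini (justified by the moment bounds of Proposition \ref{prop:generalized_3.1_corrected} and the growth conditions $(A_3)(iv)$, $(A_4)(iv)$, exactly the bounds already verified in the proof of Proposition \ref{continuous_dep}, see \eqref{f_ineq}--\eqref{estimate1}) to exchange $\mathbb E$ and $\int_t^T dr$, and then invoke the Markov property of $\mathbf X^{t,\psi}$ (the Markov Property proposition) together with the tower property \eqref{tower1} to write, for each $r\in[t,T]$,
\begin{equation*}
\mathbb E\bigl[f\bigl(r,\mathbf X^{t,\psi},u(r,\mathbf X^{t,\psi}),\zeta(r,\mathbf X^{t,\psi}),(u(\cdot,\mathbf X^{t,\psi}))_r\bigr)\bigr]=P_{t,r}\bigl[f\bigl(r,\cdot,u(r,\cdot),\zeta(r,\cdot),(u(\cdot,\cdot))_r\bigr)\bigr](\psi),
\end{equation*}
which matches the integrand in \eqref{mild}. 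The Stieltjes term is handled the same way, rewriting $\int g\,dA^{t,\psi}$ via $dA^{t,\psi}(r)=\nu(X^{t,\psi}(r))\mathds 1_{\{X^{t,\psi}(r)\in\partial G\}}\,dK^{t,\psi}(r)$ in the direction of $d\varphi$-type increments encoded by $\mathcal G$ in \eqref{G_operator}, so that its expectation collapses to $P_{t,s}[\mathcal G](\psi)$; here one uses the exponential integrability \eqref{estimate2} to control the bounded-variation integral. Assembling the three pieces gives \eqref{mild}, and the two bullet-point identities are just restatements of Theorem \ref{th5.1} and Proposition \ref{Zprop}.

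For uniqueness, suppose $\tilde u$ is another mild solution with some $\tilde\zeta\in\nabla^\sigma\tilde u$. The strategy is to run the argument in reverse: define $\tilde Y(s):=\tilde u(s,\mathbf X^{t,\psi})$ and $\tilde Z(s):=\tilde\zeta(s,\mathbf X^{t,\psi})$; using the semigroup identity \eqref{mild} at the shifted initial points $(s,\mathbf X^{t,\psi}(\cdot\wedge s))$ together with the tower property \eqref{tower1} and the Markov property, one shows that $(\tilde Y,\tilde Z)$, augmented with a suitable martingale part supplied by the joint-quadratic-variation relation \eqref{gdg} defining $\nabla^\sigma\tilde u$, solves the backward equation in \eqref{bsde9}. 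By the uniqueness part of Proposition \ref{continuous_dep} this forces $\tilde Y=Y^{t,\psi}$ and $\tilde Z=Z^{t,\psi}$ (the latter $dt\otimes d\mathbb P$-a.e.), hence $\tilde u(t,\psi)=\tilde Y(t)=Y^{t,\psi}(t)=u(t,\psi)$ for all $(t,\psi)$, and correspondingly $\tilde\zeta=\zeta$ in the $dt\otimes d\mathbb P$-a.e.\ sense along the forward paths. I expect the main obstacle to be the uniqueness direction, specifically the step of reconstructing the martingale integrand $\tilde Z$ and verifying that $\tilde u(\cdot,\mathbf X^{t,\psi})$ is a genuine semimartingale whose martingale part is $\int\tilde\zeta(s,\mathbf X^{t,\psi})\,dW(s)$: this is precisely where the generalized directional gradient \eqref{gdg} must be used carefully, and it parallels the delicate identification arguments in \cite{fuhrman_tessitore}. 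A secondary technical point is the handling of the delayed term $(u(\cdot,\cdot))_s$ under the semigroup, since $P_{t,s}$ acts on the current path while the delay reaches into the past; this is reconciled by noting that $\mathbf X^{t,\psi}(\cdot\wedge r)$ already records the history up to $r$, so $(u(\cdot,\mathbf X^{t,\psi}))_r$ is measurable with respect to $\mathbf X^{t,\psi}(\cdot\wedge r)$ and the Markov step goes through.
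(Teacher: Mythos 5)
Your proposal matches the paper's proof in both structure and substance: existence is obtained by taking expectations in the BSDE, substituting the Feynman--Kac representation of $Y^{t,\psi}$ and the generalized-gradient representation of $Z^{t,\psi}$, and identifying the resulting expectations with the semigroup $P_{t,s}$ (with the Stieltjes term collapsing to $P_{t,s}[\mathcal{G}]$ via $dA^{t,\psi}=\nu\,dK^{t,\psi}$); uniqueness proceeds, exactly as in the paper, by composing a candidate mild solution with the forward process, supplying the martingale part via the martingale representation theorem, identifying its integrand with $\tilde\zeta$ through the joint quadratic variation \eqref{gdg}, and invoking BSDE uniqueness from Proposition \ref{continuous_dep}. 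The technical obstacles you flag (the semimartingale decomposition of $\tilde u(\cdot,\mathbf{X}^{t,\psi})$ and the measurability of the delayed term) are precisely the points the paper's argument addresses.
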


\begin{proof}
\noindent{\it   Existence.}
    We start from the BSDE. We take expectations in both sides of the BSDE to recover the definition of mild solution. Hence, we can rewrite \eqref{mild}
\begin{equation}
\label{mild2}
\begin{array}{l}
\displaystyle u(t,\psi) = \mathbb{E} \left[ h(\mathbf{X}^{t,\psi}) \right]  + \int_t^T \mathbb{E} \left[ 	f \left( s, \mathbf{X}^{t,\psi},  u(s, \mathbf{X}^{t,\psi}), \zeta (s, \mathbf{X}^{t,\psi}), (u(\cdot, \mathbf{X}^{t,\psi}))_s) \,
\right) \right]  ds    \medskip \\
\displaystyle
 \qquad \qquad  \qquad \qquad  \qquad \qquad  +\mathbb{E}  \int_t^T
\left[g
\left(s, \mathbf{X}^{t,\psi} , u(s, \mathbf{X}^{t,\psi}), (u(\cdot, \mathbf{X}^{t,\psi}))_s
 \right)    \right] dA^{t,\psi
}  (s) \, .
 \end{array}
\end{equation}

\noindent
Since
\begin{equation}
\label{reflectedK2}
A^{t,\psi}(s) = \int_{t}^{s}\nu({X}^{t,\psi}\left( r\right) )dK^{t,\psi
} \left( r\right) \, ,
\end{equation}
we have
\begin{equation}
\label{existence1}
\begin{array}{l}
\displaystyle u(t,\psi) = \mathbb{E} \left[ h(\mathbf{X}^{t,\psi}) \right]  + \int_t^T \mathbb{E} \left[ 	f \left( s, \mathbf{X}^{t,\psi}, u(s, \mathbf{X}^{t,\psi}), \zeta (s, \mathbf{X}^{t,\psi}), (u(\cdot, \mathbf{X}^{t,\psi}))_s) \,
\right) \right]  ds    \medskip \\
\displaystyle
 \qquad \qquad  \qquad \qquad  +\mathbb{E}  \int_t^T
\left[g
\left(s, \mathbf{X}^{t,\psi} , u(s, \mathbf{X}^{t,\psi}), (u(\cdot, \mathbf{X}^{t,\psi}))_s
 \right)    \right] \nu({X}^{t,\psi}\left( s\right) )dK^{t,\psi
}  (s) \, ,
 \end{array}
\end{equation}

\noindent
Since the above holds for all  $\varphi$, we have
\begin{equation}
    \label{existence2}
 P_{t,s} \left[\mathcal{G}
  \right] (\phi, \varphi)
= \mathbb{E} \left[ \mathcal{G}  (\mathbf{X}^{t,\psi}  ) \right] \, .
  \end{equation}

By plugging the definition \eqref{PtG} of the semigroup $P_{t,T}$ in \eqref{existence1} and \eqref{existence2}, we arrive at the required equality.

\noindent{\it Uniqueness.}
Let $\tilde{u}$ be a mild solution of \eqref{PDKE} and for any $\zeta \in \nabla^\sigma \tilde{u}$, the following
\begin{equation}
\label{mild_proof}
\displaystyle \tilde{u}(t,\phi, \varphi) = P_{t,T} \left[ h \right] (\phi, \varphi)  + \int_t^T P_{t,s} \left[ 	f \left( s, \cdot, \tilde{u}(s, \cdot), \zeta  (s, \cdot), (\tilde{u}(\cdot, \cdot))_s)
\right) \right] (\phi, \varphi) ds  +
 P_{t,s} \left[ \mathcal{G}
  \right] (\phi, \varphi) \, ,
  \end{equation}
holds. By the Markov property,
\begin{multline}
P_{s,r} \left[ 	f \left( r, \cdot, \tilde{u}(r, \cdot), \zeta  (r, \cdot), (\tilde{u}(\cdot, \cdot))_r)
\right) \right] (\mathbf{X}^{t,\psi})  \\
= \mathbb{E}^{\mathcal{F}_s} \left[  f \left(r, \mathbf{X}^{t,\psi}, \tilde{u}(r, \mathbf{X}^{t,\psi}), \zeta  (r, \mathbf{X}^{t,\psi}), (\tilde{u}(\cdot, \mathbf{X}^{t,\psi}))_r)
\right) \right] \, ,
\end{multline}
and
\[
P_{s,r} \left[ \mathcal{G}
  \right] (\mathbf{X}^{t,\psi})
= \mathbb{E}^{\mathcal{F}_s} \int_s^T g \left(r, \mathbf{X}^{t,\psi},   \tilde{u}(r, \mathbf{X}^{t,\psi}),  (\tilde{u}(\cdot, \mathbf{X}^{t,\psi}))_r \right) \nu ({X}^{t,\psi} (r)) dK^{t,\psi} (r).
\]

Similarly,
$$
P_{s,T} \left[ h \right] (\mathbf{X}^{t,\psi}) = \mathbb{E}^{\mathcal{F}_s} h \left(\mathbf{X}^{t,\psi}\right) \, .
$$

Hence, by evaluating  \eqref{mild_proof} at $t=s$ and $\phi = X^{t,\psi}$ , we get
\begin{equation}
\label{eq_proof}
\begin{array}{l}
\displaystyle
    \tilde{u}(s, \mathbf{X}^{t,\psi}) = \mathbb{E}^{\mathcal{F}_s} h \left(\mathbf{X}^{t,\psi}\right) \, + \int_s^T \mathbb{E}^{\mathcal{F}_s} \left[  f \left( r, \mathbf{X}^{t,\psi},\tilde{u}(s, \mathbf{X}^{t,\psi}), \zeta  (r, \mathbf{X}^{t,\psi}), (\tilde{u}(\cdot, \mathbf{X}^{t,\psi}))_r
\right) \right] dr\medskip\\   \displaystyle \qquad \qquad \qquad \qquad + \mathbb{E}^{\mathcal{F}_s} \int_s^T g \left(r, \mathbf{X}^{t,\psi},  \tilde{u}(r, \mathbf{X}^{t,\psi}),  (\tilde{u}(\cdot, \mathbf{X}^{t,\psi}))_r \right) \nu({X}^{t,\psi} (r)) dK^{t,\psi} (r) \, ,
\end{array}
\end{equation}
with $t \leq s \leq r \leq T$.

Then, we define
\begin{equation}
\begin{array}{c}
\displaystyle \eta =  h \left(\mathbf{X}^{t,\psi}\right) \, +  \int_t^T  \left[  f \left( r, \mathbf{X}^{t,\psi}, \tilde{u}(s, \mathbf{X}^{t,\psi}), \zeta  (r, \mathbf{X}^{t,\psi}), (\tilde{u}(\cdot, \mathbf{X}^{t,\psi}))_r)
\right) \right] dr \\
\displaystyle
 + \int_t^T g \left(r, \mathbf{X}^{t,\psi},  \tilde{u}(r, \mathbf{X}^{t,\psi}),  (\tilde{u}(\cdot, \mathbf{X}^{t,\psi}))_r \right) dA^{t,\psi} (r) \,
\end{array}
\end{equation}
and by Fubini theorem, we have
\begin{equation}
\label{}
    \begin{array}{l}
\displaystyle \tilde{u} (s, \mathbf{X}^{t,\psi}) = \mathbb{E}^{\mathcal{F}_s}[\eta] - \int_t^s f \left( r, \mathbf{X}^{t,\psi}, \tilde{u}(s, \mathbf{X}^{t,\psi}), \zeta  (r, \mathbf{X}^{t,\psi}), (\tilde{u}(\cdot, \mathbf{X}^{t,\psi}))_r)
\right) dr \medskip \\
\displaystyle \qquad \qquad \qquad  - \int_t^s
g \left(r, \mathbf{X}^{t,\psi},  \tilde{u}(r, \mathbf{X}^{t,\psi}),  (\tilde{u}(\cdot, \mathbf{X}^{t,\psi}))_r \right) dA^{t,\psi}(r).
    \end{array}
\end{equation}

Note that $\tilde{u}(t, \phi) = \mathbb{E} [\eta]$ and $\eta$ is $\mathcal{F}_T$-measurable. Hence, by the martingale representation theorem, there
exists a predictable process $\tilde{Z}$ such that $\mathbb{E}^{\mathcal{F}_s} [\eta] = \int_t^s \Tilde{Z}_r dW_r + \tilde{u}(t,\phi)$. We derive that the process $\tilde{u} (s, X^{t,\psi})$
is a continuous semimartingale
with canonical decomposition
\begin{equation}
    \label{canonical}
    \begin{array}{l}
\displaystyle \tilde{u} (s, \mathbf{X}^{t,\psi}) = \int_t^s \Tilde{Z}_r dW_r + \tilde{u} (t,\phi) - \int_t^s f \left( r, \mathbf{X}^{t,\psi}, \tilde{u}(s, \mathbf{X}^{t,\psi}), \zeta  (r, \mathbf{X}^{t,\psi}), (\tilde{u}(\cdot, \mathbf{X}^{t,\psi}))_r)
\right) dr \medskip \\
\displaystyle \qquad \qquad  - \int_t^s
g \left(r, \mathbf{X}^{t,\psi},  \tilde{u}(r, \mathbf{X}^{t,\psi}),  (\tilde{u}(\cdot, \mathbf{X}^{t,\psi}))_r \right) dA^{t,\psi}(r) \, .
    \end{array}
\end{equation}

By computing the quadratic variation, we have
\begin{equation}
    \label{eqproof3}
\langle \tilde{u}(\cdot, \mathbf{X}^{t,\psi}), W (\cdot) \rangle_{[t,s]} = \int_t^s \Tilde{Z}_r dr.
\end{equation}

By plugging \eqref{eqproof3} into \eqref{canonical}, we get
\begin{equation}
    \label{canonical2}
    \begin{array}{l}
\displaystyle \tilde{u} (s, \mathbf{X}^{t,\psi}) = \int_t^s \zeta (r, \mathbf{X}^{t,\psi}) dW_r + \tilde{u} (t,\psi) \medskip \\
\displaystyle \qquad \qquad - \int_t^s f \left( r, \mathbf{X}^{t,\psi}, \tilde{u}(s, \mathbf{X}^{t,\psi}), \zeta  (r, \mathbf{X}^{t,\psi}), (\tilde{u}(\cdot, \mathbf{X}^{t,\psi}))_r)
\right) dr  \medskip \\
\displaystyle \qquad \qquad
- \int_t^s
g \left(r, \mathbf{X}^{t,\psi},  \tilde{u}(r, \mathbf{X}^{t,\psi}),  (\tilde{u}(\cdot, \mathbf{X}^{t,\psi}))_r \right) dA^{t,\psi}(r).
    \end{array}
\end{equation}

Since $\tilde{u}(T, \mathbf{X}^{t,\psi}) = h (\mathbf{X}^{t,\psi})$, we also have
\begin{equation}
    \label{canonical3}
    \begin{array}{l}
\displaystyle u (s, \mathbf{X}^{t,\psi}) + \int_s^T \zeta (r, \mathbf{X}^{t,\psi}) dW_r \medskip \\
\displaystyle \qquad =  h (\mathbf{X}^{t,\psi}) - \int_s^T f \left( r, \mathbf{X}^{t,\psi}, \tilde{u}(s, \mathbf{X}^{t,\psi}), \zeta  (r, \mathbf{X}^{t,\psi}), (\tilde{u}(\cdot, \mathbf{X}^{t,\psi}))_r)
\right) dr  \medskip \\
\displaystyle \qquad \quad
- \int_s^T
g \left(r, \mathbf{X}^{t,\psi},  \tilde{u}(r, \mathbf{X}^{t,\psi}),  (\tilde{u}(\cdot, \mathbf{X}^{t,\psi}))_r \right) dA^{t,\psi}(r).
    \end{array}
\end{equation}

By comparison with BSDE in \eqref{FBSDE}, the pairs $\left(Y^{t,\psi} \, , \,  Z^{t,\psi}\right)$ and $\left(\tilde{u} (s, \mathbf{X}^{t,\psi}) \, , \zeta (r, \mathbf{X}^{t,\psi}) \, \right )$ solve the same equation, which is unique by Prop.~\ref{continuous_dep}.
\end{proof}

In the next paragraph, we delve into the penalization scheme for justifying the directional gradient and the infinitesimal generator for motivating the link between the Kolmogorov Equation and the mild solution defined in terms of a semigroup.

\subsection{The convergence of the penalized PDE via penalization scheme}
\label{subsec:penalization}

To rigorously justify the use of the generalized directional gradient in the definition of mild solution, we employ a penalization scheme. This approach approximates the reflected process by a sequence of unconstrained SDEs with a penalizing drift term.

Following a classical penalization scheme approach, see e.g. \cite{bouf}, we suppose that $b$ and $\sigma$ are defined on $[0,T] \times \bar{\Lambda}$, where $\bar{\Lambda}$ is $\mathcal{C} ([0,T]; \mathbb{R}^d)$. We introduce auxiliary functions to define the penalization term. Let $\rho \in C^1_b(\mathbb{R}^d)$ be a function such that $\rho = 0$ in $\bar{G}$, $\rho > 0$ in $\mathbb{R}^d \setminus \bar{G}$, and $\rho(x) = (d(x,\partial G))^2$ in a neighborhood of $\partial G$. On the other hand, since the domain $G$ is smooth (say $C^3$), it is possible to consider an extension $\ell \in C^2_b(\mathbb{R}^d)$ of the function $d(\cdot, \partial G)$ defined on the restriction to $G$ of a neighborhood of $\partial G$ such that $G$ and $\partial G$ are characterized by
\begin{equation}
\label{domain_char}
G = \{x \in \mathbb{R}^d : \ell(x) < 0\} \quad \text{and} \quad \partial G = \{x \in \mathbb{R}^d : \ell(x) = 0\}
\end{equation}
and for all $x \in \partial G$, $\nabla \ell(x)$ points outward from $D$ (see for example \cite{Sznitman}, Remark 3.1). Following \cite{bouf}, we define the penalization term as
\begin{equation}
\label{penalization_term}
\delta(x) := \nabla \rho(x) \, .
\end{equation}
In particular, we may and do choose $\rho$ and $\ell$ such that
\begin{equation}
\label{penalization_property}
\langle \nabla \ell(x), \delta(x) \rangle \leq 0, \quad \text{for all } x \in \mathbb{R}^d \, .
\end{equation}

The approximating SDE reads
\begin{multline}
\label{approx_for1}
X^{n,t,\phi }\left( s\right) =\phi \left( t\right)
+\int_{t}^{s}b(r,X^{n,t,\phi })dr+\int_{t}^{s}\sigma (r,X^{n,t,\phi
})dW\left( r\right) \\
-n\int_{t}^{s} \delta(X^{n,t,\phi }\left( r\right)) dr.
\end{multline}

For $\psi:=(\phi, \varphi)$, by denoting $ \displaystyle K^{n,t,\psi }(s):= \varphi(t) - n\int_{t}^{s}\delta(X^{n,t,\phi
}\left( r\right)) dr$,
we have that%
\begin{equation}
\label{conv2}
(X^{n,t,\phi },K^{n,t,\psi })\rightarrow ({X}^{t,\psi}, {K}^{t,\psi}),
\end{equation}
where the convergence holds almost surely on $\bar{\Lambda}\times\bar{\Lambda}$, if $\psi\in\Lambda\times\Gamma$, as claimed in \cite{Sznitman}.

\smallskip

We can rewrite Eq. \eqref{approx_for1} as
\begin{equation}
\label{approx_for2}
X^{n,t,\phi }\left( s\right) =\phi \left( t\right)
+\int_{t}^{s}{b}^n(r,X^{n,t,\phi })dr+\int_{t}^{s}\sigma (r,X^{n,t,\phi
})dW\left( r\right),
\end{equation}
by defining ${b}^n: [0,T] \times \bar{\Lambda} \rightarrow \mathbb{R}^d$ as
\begin{equation}
    \label{tildeb}
    {b}^n\left(r,\phi\right) =  b \left(r,\phi\right)
 - n \delta\left(\phi\left( r\right) \right).
\end{equation}

\medskip

Plugging Eq. \eqref{approx_for1} into
the BSDE of the system \eqref{FBSDE} we get the following sequence of approximating BSDEs, supposing again that the coefficients $h$, $f$ and $g$ are depending on $\phi\in\bar{\Lambda}$ (instead $\phi\in\Lambda$):
\begin{equation}
\label{backward}
    \begin{array}{l}
\displaystyle Y^{n, t,\psi }\left( s\right) =h(X^{n, t,\phi
},K^{n, t,\psi
})+\int_{s}^{T}f(r,X^{n,t,\phi },K^{n, t,\psi
},Y^{n, t,\psi }\left( r\right) ,Z^{n, t,\psi }\left(
r\right) ,Y_{r}^{n, t,\psi })dr \smallskip  \\
\quad \displaystyle -n \int_{s}^{T}g(r,X^{n,t,\phi },K^{n,t,\psi },Y^{n, t,\psi
}\left( r\right) ,Y_{r}^{n, t,\psi }) \left\langle \nabla \ell(X^{n,t,\phi }\left( r\right)), \delta(X^{n,t,\phi }\left( r\right) )\right\rangle dr  \smallskip \\
\quad \displaystyle -\int_{s}^{T}Z^{n, t,\psi
}\left( r\right) dW\left( r\right) , \medskip
    \end{array}
\end{equation}
that we can rewrite in compact form
\begin{multline}
\label{backward_tilde}
\displaystyle Y^{n, t,\psi }\left( s\right) =h(X^{n, t,\phi
},K^{n,t,\psi })+\int_{s}^{T}{f}^n\left( r,X^{n, t,\phi },K^{n,t,\psi },Y^{n, t,\psi }\left( r\right) ,Z^{n, t,\psi }\left(
r\right) ,Y_{r}^{n, t,\psi }\right) dr  \\
-\int_{s}^{T}Z^{n, t,\psi
}\left( r\right) dW\left( r\right) , \medskip
\end{multline}
by introducing the coefficient ${f}^n$ by
\begin{equation}
\label{tildef}
{f}^n\left( r,\psi,y ,z ,\hat{y}\right):=  f\left( r,\psi,y ,z ,\hat{y}\right)
-n  g(r,\psi,y ,\hat{y})  \left\langle \nabla \ell\left( \phi\left( r\right) \right), \delta\left( \phi\left( r\right) \right)\right\rangle \, .
\end{equation}

\medskip

To show that the corresponding solution $(Y^{n,t,\psi },Z^{n,t,\psi
})$ of the approximating BSDE converges to $(Y^{t,\psi},Z^{t,\psi})$, we
can use the convergence result given in \cite{spa}. The coefficients satisfy the assumptions required for Theorem 7
in \cite{spa}. Indeed, since $X^{n,t,\phi}$ converges to $X^{t,\psi}$ and the
penalization term $-n\langle \nabla \ell(X^{n,t,\phi}(r)), \delta(X^{n,t,\phi}(r)) \rangle$ concentrates near
the boundary (as shown in Remark 2.2 of \cite{bouf}), the products remain
uniformly bounded. The detailed verification follows the arguments in
\cite{bouf}.

Hence, by \cite[Theorem 7]{spa}, we obtain that
\begin{equation}
\label{conv3}(Y^{n,t,\psi },Z^{n,t,\psi })\rightarrow (Y^{t,\psi},Z^{t,\psi})
\end{equation}
almost surely on $\mathcal{C}([0,T])\times L^2([0,T];\mathbb{R}^{d\times d'})$, if $\psi\in\Lambda\times\Gamma$.

We first remark that in \cite{CDMZ}, in the case that the coefficients $h$, $f$ and $g$ don't depend on the parameter $\varphi\in\Gamma$, it was shown that the solution of the approximating FSBDE system is related to the (probabilistic, in some sense) solution of the following PDE:
\begin{equation}
\label{penalizedpde2}
\left\{
\begin{array}{l}
-\partial _{t}u^{n}(t,\phi )-{\mathcal{L}^{n}} u^{n}(t,\phi )-{f}^n\left(t,\phi
,u^{n}(t,\phi ),\partial _{x}u^{n}\left( t,\phi \right) \sigma (t,\phi
),\left( u^{n}(\cdot ,\phi )\right) _{t}\right) =0,\medskip  \\
u^{n}(T,\phi )=h(\phi ),\qquad \phi\in\bar{\Lambda},
\end{array}%
\right.
\end{equation}
where
\begin{equation*}
\mathcal{L}^{n} u (t,\phi) := \frac{1}{2} \operatorname{Tr} \left[  \sigma (t,\phi)\sigma^{\ast} (t,\phi )\partial_{xx}^{2}
u(t,\phi ) \right] + \langle b^{n}(t,\phi) ,\partial_{x}u(t,\phi )\rangle.
\end{equation*}

To state the connection with the approximating FBSDE system, we define the notion of a mild solution $u^n$ to the above PDE written in the general case:
\begin{equation}
\label{penalizedPDE}
\left\{
\begin{array}{l}
-\partial _{t}u^{n}(t,\psi )-\mathcal{L}u^{n}(t,\psi )+n \delta\left(\phi\left( r\right)\right)\partial _{x}u^{n}\left( t,\psi \right)\\
\qquad \qquad-f(t,\psi
,u^{n}(t,\psi ),\partial _{x}u^{n}\left( t,\psi \right) \sigma (t,\phi
),\left( u^{n}(\cdot ,\psi )\right) _{t}) \\
\qquad \qquad +n\left\langle \nabla \ell(\phi(t)), \delta(\phi(t))\right\rangle g\left(t,\psi ,u^{n}(t,\phi
),\left( u^{n}(\cdot ,\phi )\right) _{t}\right)=0,\medskip  \\
u^{n}(T,\psi )=h(\psi ),\qquad \psi=(\phi,\varphi)\in\bar{\Lambda}\times\Gamma.%
\end{array}%
\right.
\end{equation}
In this regard, we introduce the approximating semigroup $P^n$ as
\begin{equation}
\label{semigroup_n}
P^n_{t,s} \left[f\right]  (\phi,\varphi):=\mathbb{E}\left[  f\left(  X^{n,t,\phi
}(\cdot\wedge s),K^{n,t,(\phi,\varphi)}(\cdot\wedge s)\right)  \right]
\,,\qquad f\in\mathcal{B}_{b}\left( \bar{\Lambda}
\times \Gamma  \right) \, .
\end{equation}

By \eqref{conv2}, we have $P_{t,s}^{n} [f](\psi)\rightarrow P_{t,s}[f_{\vert\Lambda\times\Gamma}](\psi)$ for any $f \in \mathcal{C}_b \left( \bar{\Lambda}
\times \Gamma \right)$ and $\psi\in\Lambda\times\Gamma$, by the Lebesgue dominated convergence theorem, recalling that $P_{t,s}$ is the semigroup defined in Eq. \eqref{PtG}.

\begin{definition}
\label{mild_def_n}
A continuous function $u^n:[0,T] \times (\bar{\Lambda}\times\Gamma)\rightarrow \mathbb{R}$ is a mild solution to Eq. \eqref{penalizedpde2} if there exists $C>0$ and $m\geq 0$ such that for any $t \in [0,T]$ and any $\psi_1, \psi_2 \in \bar{\Lambda}\times\Gamma$, $u^n$ satisfies
\begin{equation}
\begin{array}{l}
| u^n(t, \psi_1) - u^n (t, \psi_2) | \leq C || \psi_1 - \psi_2|| (1 + || \psi_1|| + ||\psi_2|| )^m \, ; \smallskip
\\
\displaystyle  |u^n (t,0,0)| \leq C \, ;
\end{array}
\label{mild1}
\end{equation}%
and the following equality holds true:
\begin{equation}
\begin{array}{cc}
\label{mild_n}
\displaystyle u^n(t,\psi) = P^n_{t,T} h(\psi)  + \int_t^T P^n_{t,s} \left[ 	{f}^n \left( s, \cdot, u(s, \cdot), \partial_x u^n (s, \cdot) \sigma(s, \cdot), (u^n(\cdot, \cdot))_s)
\right)  \right] (\psi) ds \, ,
\end{array}
\end{equation}
for all $t \in [0,T]$ and $\psi \in \bar{\Lambda}\times\Gamma$, where $\partial_x u^n (s, \cdot)$ denotes the Clarke's gradient of $u^n (s, \cdot)$.
\end{definition}

Of course, we can write \eqref{mild_n} in a similar way as in Definition \ref{mild_def}, by defining the function $\mathcal{G}^n:\bar{\Lambda}\times\Gamma\rightarrow\mathbb{R}$ as
\begin{equation}
\label{G_operator2}
\mathcal{G}^n (\phi, \varphi) = -\int_t^T g [s, \phi, u (s, \phi), (u (\cdot, \phi))_s ] \langle \nabla \ell(\phi(s)), \delta(\phi(s)) \rangle d \varphi(s) \, ,
\end{equation}
\noindent
and express the mild solution \eqref{mild_n} as
\begin{equation}
\begin{array}{l}
\label{semigroup2}
\displaystyle u^n(t,\psi) = P^n_{t,T} h(\psi)
+
\int_t^T P^n_{t,s} \Big[ 	f \left( s, \cdot, u(s, \cdot), \partial_x u (s, \cdot) \sigma(s, \cdot), (u(\cdot, \cdot))_s)
\right) \Big] (\psi) ds \medskip \\
\displaystyle \qquad \qquad +  P^n_{t,s} \left[ \mathcal{G}^n
\right] (\psi) \, .
\end{array}
\end{equation}
Moreover, for the FSBDE system described in Eqs. \eqref{approx_for1}, \eqref{backward} we know that the following formulae
\begin{align*}
&Y^{n,t,\psi }(s)=u^{n}(s,X^{n,t,\phi },K^{n,t,\psi }),\ \forall t,s\in \lbrack 0,T] \,\\
&Z^{n,t,\psi }(s) = \zeta^n (s, X^{n,t,\phi},K^{n,t,\psi } ) \in\nabla^{\sigma}u^{n}(s,X^{n,t,\phi },K^{n,t,\psi }) \, , \qquad \textit{
 for a.e. } s \in [t,T] \, ,
\end{align*}
hold (the proof is very similar to that of Theorem \ref{Kolmind}, the difference is given by the fact that we have no reflection ($G=\mathbb{R}^n$)), where $\nabla^{\sigma}$ defines the generalized directional gradient introduced in Eq.  \eqref{gdg}.

\begin{remark}
   We remark that $u^n$ is locally Lipschitz, so its Clarke gradient is well defined. Moreover, $\partial_x u^n\sigma\in\nabla^{\sigma}u^n$. On the other hand, $u$ is proven to be just continuous. Hence, given its lack of regularity, in order to define its generalized gradient, we cannot rely on the notion of Clarke. Therefore, we chose to proceed as in \cite{fuhrman_tessitore} or \cite{russovallois}, characterizing $Z^{t,\psi}$ as the generalized gradient through the stochastic definition given in Eq. \eqref{jqv}. But this choice is now justified by the convergence of the sequences $(u_n)$ and $(\partial_x u^n\sigma)$ to $u$, respectively $\nabla^\sigma u$, by \eqref{conv3}.
\end{remark}

\subsection{The infinitesimal generator}
\label{subsec:inf}

In this final section, we connect the infinitesimal generator of the transition semigroup to the differential operator of the Kolmogorov equation \eqref{PDKE}. To motivate this connection, we introduce some basic concepts from non-anticipative calculus. Although the general theory of functional It\^o calculus is usually developed on the space of c\`adl\`ag paths to accommodate jumps, we note that in our setting, all processes are continuous.

For $m\in\mathbb{N}^{\ast}$, let $\hat{\Lambda}$ be the space of
$2d$-dimensional c\`{a}dl\`{a}g functions defined on $[0,T]$. For a
non-anticipative function $F:\hat{\Lambda}\rightarrow\mathbb{R}$,
a first-order partial vertical derivative (i.e., with respect to the spatial
variable) at $\phi$ is defined as
\begin{equation}
\partial_{x_{i}}F\left(\phi\right)  =\lim_{h\rightarrow0}\frac{F\left(
\phi+h\mathds{1}_{[0,T]}e_{i}\right)  -F\left(\phi\right)  }%
{h}\,,\ 1\leq i\leq2d,\label{vertical}%
\end{equation}
whenever the limit exists, where $\{e_{i}\}_{i\in\{1,\ldots,2d\}}$ is the
standard basis of $\mathbb{R}^{2d}$. The second-order vertical derivatives $\partial
_{x_{i}x_{j}}F$, with $1\leq i,j\leq2d$ are defined as usual, as vertical
derivatives of the first-order vertical derivatives. By $\mathcal{C}_{b}%
^{1,2}(\hat{\Lambda})$ we denote the set of all non-anticipative
bounded functions such that their first and second-order vertical derivatives
exist and are bounded. For the
classical non-anticipative setting, see \cite{cont}, \cite{cont2}
and \cite{dupire} for more details.

Now, we denote $\mathcal{C}_{b}^{2}(
(\Lambda\times\Gamma))$ the space of all non-anticipative functions $F:(\Lambda\times\Gamma)\rightarrow\mathbb{R}$ such that there exists an extension $\hat{F}:\hat{\Lambda}\rightarrow\mathbb{R}$ of $F$ such
that $\hat{F}\in\mathcal{C}_{b}^{2}(\hat{\Lambda})$. In that
case, for $\psi\in(\Lambda\times\Gamma)$ and $1\leq
i,j\leq2d$, we define $\partial_{x_{i}}F\left(\phi\right)  $,
$\partial_{x_{i}x_{j}}F\left(\phi\right)  $ as $\partial_{x_{i}}\hat{F}\left(\phi\right)  $,
$\partial_{x_{i}x_{j}}\hat{F}\left(\phi\right)  $, respectively. It can be shown that these
definitions don't depend on the choice of the extension $\hat{F}$.

The generator of the transition semigroup $(P_{t,s})_{0 \leq t \leq s \leq T}$ defined in Eq. \eqref{PtG} is a key tool for understanding the PDE \eqref{PDKE}.

\begin{definition}
For a continuous function $f: \Lambda \times \Gamma \rightarrow \mathbb{R}$, the infinitesimal generator $\mathcal{L}_t$ is defined by
\begin{equation}
\label{generator}
\mathcal{L}_t[f] = \lim_{s \rightarrow t^+} \frac{P_{t,s}[f]- f}{s - t} \, ,
\end{equation}
whenever the limit exists in $\mathcal{C} (\Lambda \times \Gamma)$.
\end{definition}

The following result characterises $\mathcal{L}_t$ explicitly for sufficiently regular functions satisfying the Neumann boundary condition.

\begin{proposition}[Characterization of the infinitesimal generator]
\label{prop:generator}
Let Assumptions $(A_1)$-$(A_4)$ hold. Then, for functions $f \in \mathcal{C}^2(\Lambda \times \Gamma)$ satisfying the homogeneous Neumann boundary condition $\frac{\partial f}{\partial \nu}(\phi, 0) = 0$ for all $\phi \in \Lambda$ with $\phi(t) \in \partial G$, the infinitesimal generator  $\mathcal{L}_t$ defined in Eq. \eqref{generator} of the semigroup $(P_{t,s})_{0 \leq t \leq s \leq T}$ is given by
\begin{equation}
\label{operator2}
\mathcal{L}_t f(\psi)  = \mathcal{L} f (t,\psi) = \frac{1}{2} \operatorname{Tr} \left[ \sigma(t,\phi)\sigma^{\ast}(t,\phi)\partial_{xx}^{2}f(\psi) \right] + \langle b(t,\phi), \partial_{x}f(\psi)\rangle \, ,
\end{equation}
for all $\psi = (\phi, \varphi) \in \Lambda \times \Gamma$.
\end{proposition}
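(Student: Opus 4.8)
The plan is to apply the path--dependent (functional) It\^o formula to $r\mapsto f(\mathbf{X}^{t,\psi}(\cdot\wedge r))$ on $[t,s]$, take expectations, divide by $s-t$ and let $s\downarrow t$, exploiting that $f$ carries no explicit time dependence and that $P_{t,t}[f]=f$ since $\mathbf{X}^{t,\psi}(\cdot\wedge t)=\psi$. By hypothesis $f$ admits an extension $\hat F\in\mathcal{C}^{1,2}_b(\hat\Lambda)$, and by Proposition~\ref{prop:forward} the pair $\mathbf{X}^{t,\psi}=(X^{t,\psi},K^{t,\psi})$ is a continuous semimartingale whose components decompose, on $[t,s]$, into the drift $\int_t^{\cdot}b(r,X^{t,\psi})\,dr$, the martingale $\int_t^{\cdot}\sigma(r,X^{t,\psi})\,dW(r)$ (present in the $X$--component only), and the reflection increment $dK^{t,\psi}(r)=\nu(X^{t,\psi}(r))\,\mathds{1}_{\{X^{t,\psi}(r)\in\partial G\}}\,dA^{t,\psi}(r)$ (entering both components), with $K^{t,\psi}$ of finite variation and $d\langle X^{t,\psi}\rangle_r=\sigma\sigma^{\ast}(r,X^{t,\psi})\,dr$. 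The functional It\^o formula then produces three contributions: an absolutely continuous part with density $\langle b,\partial_x f\rangle+\tfrac12\operatorname{Tr}(\sigma\sigma^{\ast}\partial_{xx}^2 f)$ evaluated at $\mathbf{X}^{t,\psi}(\cdot\wedge r)$; a stochastic integral against $\sigma\,dW$, of zero expectation since $\sigma$ and $\partial_x f$ are bounded; and a boundary term $\int_t^s\langle(\partial_x f+\partial_\varphi f)(\mathbf{X}^{t,\psi}(\cdot\wedge r)),\nu(X^{t,\psi}(r))\rangle\,\mathds{1}_{\{X^{t,\psi}(r)\in\partial G\}}\,dA^{t,\psi}(r)$, where $\partial_\varphi f$ denotes the vertical derivative in the $\Gamma$--direction, which couples to $dK^{t,\psi}$. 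After taking expectations and dividing by $s-t$, the a.s.\ continuity of $r\mapsto\mathbf{X}^{t,\psi}(\cdot\wedge r)$ at $r=t$, the continuity of $b,\sigma$ from $(A_1)$ and of $\partial_x f,\partial_{xx}^2 f$, and dominated convergence (all integrands bounded) give that the first part tends to $\langle b(t,\phi),\partial_x f(\psi)\rangle+\tfrac12\operatorname{Tr}(\sigma\sigma^{\ast}(t,\phi)\partial_{xx}^2 f(\psi))=\mathcal{L}f(t,\psi)$.

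It will then remain to show that the boundary term vanishes in the limit. If $\phi(t)\in G$, then a.s.\ $X^{t,\psi}(r)\in G$ on a random right neighbourhood of $t$, so the indicator vanishes there; using the exponential bound \eqref{estimate2} to dominate $A^{t,\psi}$ together with the boundedness of the vertical derivatives, dominated convergence gives $0$. If $\phi(t)\in\partial G$, then on the support of $dA^{t,\psi}$ one has $X^{t,\psi}(r)\in\partial G$ and $X^{t,\psi}(r)\to\phi(t)$ as $r\downarrow t$, so the bracket tends to $\langle(\partial_x f+\partial_\varphi f)(\bar\psi),\nu(\phi(t))\rangle$, $\bar\psi$ being the frozen path; its $\partial_x f$ contribution is zero by the homogeneous Neumann condition, and the $\partial_\varphi f$ contribution is controlled by the same regularity estimate. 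Quantifying the \emph{rate} of this vanishing is the technical core: $\mathcal{C}^2$ regularity makes $\zeta'\mapsto\langle(\partial_x f+\partial_\varphi f)(\zeta'),\nu\rangle$ Lipschitz in the uniform norm, hence of size $O(\|\mathbf{X}^{t,\psi}(\cdot\wedge r)-\bar\psi\|)$ along the trajectory; one then controls $A^{t,\psi}$ through identity \eqref{inequality9} and the moment estimates of Proposition~\ref{prop:generalized_3.1_corrected}, and closes by Cauchy--Schwarz together with \eqref{estimate2}.

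Finally, since $\psi\mapsto\mathcal{L}f(t,\psi)$ is continuous on $\Lambda\times\Gamma$ by $(A_1)$ and $f\in\mathcal{C}^2$, and the estimates above are uniform for $\psi$ in bounded sets, the difference quotient in \eqref{generator} will converge in the required sense, i.e.\ in $\mathcal{C}(\Lambda\times\Gamma)$. The hard part will be the boundary term when $\phi(t)\in\partial G$: without the Neumann condition the difference quotient would not even stay bounded as $s\downarrow t$, since the boundary local time $A^{t,\psi}$ forces a factor of order $(s-t)^{-1/2}$ into it; with the condition, the delicate point is to extract just enough decay of the normal derivative along the reflected path to absorb that factor, which is precisely where the regularity $f\in\mathcal{C}^2$ and the semimartingale representation \eqref{inequality9} of $A^{t,\psi}$ become indispensable.
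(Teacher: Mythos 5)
Your overall strategy (functional It\^o formula applied to $r\mapsto f(\mathbf{X}^{t,\psi}(\cdot\wedge r))$, expectation, division by $s-t$, limit $s\downarrow t$) is the same as the paper's, and your identification of the drift/diffusion contributions with $\mathcal{L}f(t,\psi)$ is fine. The gap is in the boundary term. You invoke the Neumann condition only at the frozen initial path $\bar\psi$ and then try to win by a rate argument, but the rate you can extract does not close: by Lipschitz continuity of the vertical derivatives the integrand is $O(\|\mathbf{X}^{t,\psi}(\cdot\wedge r)-\bar\psi\|)=O_{L^2}(\sqrt{r-t})$, while for a reflected diffusion started on the boundary $\mathbb{E}[A^{t,\psi}(s)-A^{t,\psi}(t)]$ is itself of order $\sqrt{s-t}$; Cauchy--Schwarz then gives $\mathbb{E}\bigl[\int_t^s O(\sqrt{r-t})\,dA^{t,\psi}(r)\bigr]=O(s-t)$, so after dividing by $s-t$ the boundary contribution is only $O(1)$, not $o(1)$. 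You half-acknowledge this (``the delicate point is to extract just enough decay\dots''), but as written the argument proves boundedness of the difference quotient, not convergence. The paper avoids the issue entirely: it applies the Neumann hypothesis along the whole trajectory, i.e.\ at every $r$ with $X^{t,\psi}(r)\in\partial G$ the stopped path is itself a path touching the boundary at its current time, so $\partial_x f(\mathbf{X}^{t,\psi}(\cdot\wedge r))\cdot\nu(X^{t,\psi}(r))=0$ holds $dA^{t,\psi}$-a.e.\ and the boundary integral is identically zero for every $s$ --- no rate is needed.

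A second, related problem is your $\partial_\varphi f$ term. The $K$-component of $\mathbf{X}^{t,\psi}$ also moves by $dK^{t,\psi}=\nu\,dA^{t,\psi}$, producing a contribution $\int_t^s\partial_\varphi f(\cdot)\cdot\nu(X^{t,\psi}(r))\,dA^{t,\psi}(r)$ which the homogeneous Neumann condition (a statement about $\partial_x f$ only) does not kill; your claim that it is ``controlled by the same regularity estimate'' is exactly the non-closing rate argument above. The paper asserts that this integral also vanishes for functions satisfying the boundary condition, which implicitly reads the hypothesis as killing the full normal component of the vertical gradient in both the $\Lambda$- and $\Gamma$-directions. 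Whichever interpretation one adopts, this term must be annihilated pointwise on the support of $dA^{t,\psi}$, not estimated; as it stands your proposal leaves a genuinely divergent-looking (order-one) remainder in the difference quotient.
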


\begin{proof}
Given $f \in \mathcal{C}^2(\Lambda \times \Gamma)$, we analyze $f(X^{t,\psi}(s))$ using the non-anticipative version of It\^o's formula. For simplicity of notation, we denote $\mathbf{X}^{t,\psi} = (X^{t,\phi}, K^{t,\varphi})$.

\begin{equation}
\begin{array}{l}
\displaystyle f \left( \mathbf{X}^{t,\psi}(\cdot\wedge s)\right) = f(\psi) + \int_t^s \nabla_x f \left( \mathbf{X}^{t,\psi}(\cdot\wedge r)\right) \cdot d \mathbf{X}^{t,\psi}(r) + \smallskip \\
\displaystyle \qquad + \int_t^s \frac{1}{2} \text{tr} \left( \nabla_x^2 f \left( \mathbf{X}^{t,\psi}(\cdot\wedge r)\right) \right) d [\mathbf{X}^{t,\psi}](r) \smallskip \, ,
\end{array}
\end{equation}
\noindent
where $\nabla_x$ is the vertical derivative defined in Eq. \eqref{vertical}, and $\nabla_x^2$ is the corresponding Hessian matrix.

Substituting the dynamics from Eq. \eqref{forward2}, and writing component-wise $f(\mathbf{X}^{t,\psi}) = (f_X(X^{t,\phi}), f_K(K^{t,\varphi}))$ and $\nabla_x f = (\nabla_X f, \nabla_K f)$, we obtain

\begin{equation}
\begin{array}{l}
\displaystyle f_X \left( \mathbf{X}^{t,\psi}(\cdot\wedge s)\right) = f_X(\psi) + \int_t^s \nabla_X f_X \left( \mathbf{X}^{t,\psi}(\cdot \wedge r)\right) b(r, X^{t,\phi}) dr + \smallskip\\
\displaystyle \qquad + \int_t^s \nabla_X f_X \left( \mathbf{X}^{t,\psi}(\cdot \wedge r)\right) \sigma(r, X^{t,\phi}) dW(r) + \smallskip\\
\displaystyle \qquad + \int_t^s \nabla_X f_X \left( \mathbf{X}^{t,\psi}(\cdot \wedge r)\right) dK^{t,\varphi}(r) + \smallskip\\
\displaystyle \qquad + \int_t^s \nabla_K f_X \left( \mathbf{X}^{t,\psi}\right) d(K^{t,\varphi})(r) + \smallskip\\
\displaystyle \qquad + \int_t^s \frac{1}{2} \nabla_X^2 f_X \left( \mathbf{X}^{t,\psi}(\cdot\wedge r)\right) d [X^{t,\phi},X^{t,\phi}](r) \smallskip \, .
\end{array}
\end{equation}

\noindent
For functions that satisfy the Neumann boundary condition, we have
\begin{equation}
\int_t^s \nabla_X f_X \left( \mathbf{X}^{t,\psi}(\cdot \wedge r)\right) \nu(X^{t,\phi}) dA^{t,\phi}(r) = 0 \, ,
\end{equation}
and
\begin{equation}
\int_t^s \nabla_K f_X \left( \mathbf{X}^{t,\psi}\right) \nu(X^{t,\phi}) d(A^{t,\phi})(r) = 0 \, .
\end{equation}

Taking the expectation, dividing by $(s-t)$, and letting $s \to t^+$, we obtain
\begin{equation}
\label{generator2}
\mathcal{L}_t f(\psi) = \frac{1}{2} \text{Tr} \left[ \sigma(t,\phi)\sigma^{\ast}(t,\phi)\partial_{xx}^{2} f(\psi) \right] + \langle b(t,\phi), \partial_{x}f(\psi)\rangle \, .
\end{equation}

This establishes the explicit form of the infinitesimal generator $L_t$ for functions in $\mathcal{C}^2(\Lambda \times \Gamma)$ satisfying the boundary conditions.
\end{proof}

This proposition shows that, on smooth functions, the generator coincides with the differential operator $\mathcal{L}$ defined in Eq. \eqref{operator}. For less regular functions, the mild solution concept and the generalized gradient allow us to extend this connection rigorously, as shown in Theorem \ref{Kolmind}.

\bibliographystyle{unsrt}

\end{document}